\newtheorem{theorem}{Theorem}
\newtheorem{corollary}{Corollary}
\newtheorem{proposition}{Proposition}
\newtheorem{lemma}{Lemma}
\newtheorem{definition}{Definition}
\newcommand{\p}{\Bbb{P}}
\newcommand{\N}{\mbox{\rm I\hspace{-0.02in}N}}
\newcommand{\R}{\mathbb{R}}
\newcommand{\ud}{\mathrm{d}}
\newcommand{\E}{\ensuremath{\mathbb{E}}}
\begin{document}

\title{The Wright-Fisher model with efficiency}
\author[1]{Adri\'an Gonz\'alez Casanova}
\author[2]{Ver\'onica Mir\'o Pina}
\author[3]{Juan Carlos Pardo}

\affil[1]{\footnotesize Instituto de Matem\'aticas de la Universidad Nacional Aut\'onoma de M\'exico, \'Area de la Investigaci\'on Cient\'ifica, Circuito Exterior, C.U., 04510 Coyoac\'an, CDMX, M\'exico. \textit{ Corresponding author:} {adriangcs@matem.unam.mx }} 
\affil[2]{\footnotesize Instituto de Investigaciones en Matem\'aticas Aplicadas y Sistemas, Universidad Nacional Aut\'onoma de M\'exico, Circuito Escolar 3000, C.U., 04510 Coyoac\'an, CDMX, M\'exico}
\affil[3]{\footnotesize Centro de Investigaci\'on en Matem\'aticas A.C., Calle Jalisco s/n. 36240 Guanajuato, M\'exico}

\date{}
\maketitle 
\begin{abstract}
In populations competing for resources, it is natural to ask whether consuming fewer resources provides any selective advantage. 
To answer this question, we propose a Wright-Fisher model with two types of individuals: the inefficient individuals, those who need more resources to reproduce and can have a higher growth rate,  and the efficient individuals. In this model, the total amount of resource $N$, is fixed, and the population size varies randomly depending on the number of efficient individuals. We show that, as $N$ increases, the frequency process of efficient individuals converges to a diffusion which is a generalisation of the Wright-Fisher diffusion with selection. 
The genealogy of this model is given by a branching-coalescing process that  we call the \textit{Ancestral Selection/Efficiency Graph}, and that is an extension of the Ancestral Selection Graph (\cite{KN1, KN2}).
The main contribution of this paper is that, in evolving populations, inefficiency can arise as a promoter of selective advantage and not necessarily as a trade-off.
\end{abstract}

\textbf{Keywords}: 
Population genetics; Duality;  Ancestral selection graph; Efficiency; Balancing selection; Generalized Wright Fisher model

\textbf{MSC}:  Primary 60K35;Secondary 60J80.

\section{Introduction}

\subsection{Biological context}
 Organisms within an ecological niche compete for resources such as food or water. Different populations competing for the same resource can display different consumption strategies. While some organisms use small amounts of it to reproduce, their competitors may need larger amounts. Here we classify these two resource counsumption strategies as \textit{efficient}, when the cost of producing progeny is low, and \textit{inefficient} when this cost is high. 
Efficient individuals may have some advantage, as they can have more progeny when the resource is scarce. Nonetheless, this is also altruistic, since untapped resources can be used by their competitors. 
Therefore, it is not clear how  these different strategies are selected, or how do efficiency and cost shape the evolution of populations.

 For instance, one can think of the long term evolution experiment with {\it Escherichia coli}, led by Richard Lenski (see \cite{Lenski, Lenskitravisano} for an overview and \cite{GKWY, baake} for a mathematical model). In the twelve parallel replicates, after 10000 generations bacteria have become bigger. This suggests that consuming more resources could provide some advantage (see \cite{Lenskitravisano} for further details). 



Resource consumption strategies have been studied by ecologists. 
The $r/K$ theory (\cite{macarthur, pianka}) predicts that selective pressures will drive the evolution of a species into one of two general directions: $r$ strategies where the resource is allocated to the production of many low cost offspring; or $K$ strategies characterized by the investment of large amounts of resource in each descendant.
In that model the probability that an offspring survives to a reproductive age increases with its cost.
\cite{tilman} proposed a model in which different species compete for a single resource (see  \cite{Miller} for a review). Species's growth rates are proportional to resource availability and their consumption rate. This theory predicts that the only species that will survive is the one that has the highest consumption rate (lowest $R^*$).
Here, we will define a model in which the cost of producing new offspring does not depend on resource availability and is not necessarily proportional to the reproduction rate (or the survival probability). 

Moreover, physiological  trade-offs have long been used to explain the persistence of inefficient microbial populations (see  \cite{molenaar} or \cite{lipson} and the references therein).
For example, in \cite{novak}, the authors have found a within-population negative correlation between growth and resource consumption rates, using bacterial populations from Lenski's experiment (\cite{Lenski}). 
 The molecular bases for these trade-offs are not so clear, although different metabolic explanations have been suggested (see \cite{beardmore} for a review). In our model, having an efficient strategy does not necessarily imply small growth rates.
 
 In this paper, we aim to study resource consumption strategies using population genetics arguments.
We propose an extension of the Wright-Fisher model (\cite{F58,W31})  where two types of individuals need different amounts of resource to produce offspring.
The total amount of resource at each generation is fixed but the population size is not: the higher the proportion of efficient individuals, the larger the population can be.
Models where the population size fluctuates stochastically had already been considered early in the population genetics literature, for example in \cite{seneta, donnelly, Tavare, Jagers, KK}.
Demographic stochasticity has also been modelled using birth-death processes, for example in \cite{Lambert2005} or  \cite{Parsons2007a, Parsons2007b, Parsons2008, Parsons2010}.
The specificity of our approach is that in our model there is a strong coupling between the size of the population and its genetic profile.

The main results of this paper is that inefficiency can enhance the effect of beneficial mutations, i.e.
if a beneficial mutation arises in an inefficient individual, it is more likely to go to fixation than if it arises in an efficient individual.
Furthermore, differences in resource consumption strategies can be a mechanism of balancing selection.
 This work could help bridging the gap between classical models in population genetics, and more explicit ecological and physiological assumptions.

 \subsection*{Outline}
 In Section \ref{themodel} we introduce the model in detail. 
  In Section \ref{S13}, we study the large population limit of the first version of the model and some of its properties such as the fixation probabilities.  In Section \ref{introASEG}, we study the associated genealogical process. Finally, Section \ref{S15} is devoted to the study of a second version of the model. These results are discussed in Section \ref{S16}. 
 Sections \ref{S2}, \ref{sectASEG} and \ref{S4} are  devoted to the proofs.

\subsection{The model}
\label{themodel}
The model is a modification of the classical Wright-Fisher model, in the sense that individuals still choose their parents  independently  at random from the previous generation. Instead of assuming a fixed number of individuals, we fix the amount of resource per generation.

In this section,  we fix $N\in\mathbb{N}$ and $ \kappa,s,x \in[0,1]$, where $N$ is the fixed amount of resource in each generation,  $\kappa$ denotes the efficiency parameter, $-s$ is the selection coefficient of the efficient individuals and $x$ parametrises  the initial frequency of efficient individuals.
We consider two types of individuals with different consumption strategies:
 \begin{itemize}
 \item type $0$ (efficient), that have selection coefficient $-s$ and need $1-\kappa$ units of resource to be produced.
  \item type $1$ (inefficient), that have selection coefficient 0  and need $1$ unit of resource to be produced.
 \end{itemize}
 The case where $s = 0$ corresponds to the neutral setting.
In the model with selection ($s>0$), the efficient individuals carry a deleterious mutation (or, equivalently, the inefficient individuals carry a beneficial mutation). Recall that the selective disadvantage of the efficient individuals can be due to a trade-off between efficiency and growth rate or to an independent mutation.
 The case $\kappa = 0$ corresponds to the classical Wright-Fisher model (with selection).

 In our model, generations are constructed recursively: at each generation we start with $N$ units of resource. The first individual consumes either 1 or $1-\kappa$ units of resource, implying that $N-1$ or $N-1+\kappa$ units are left for the rest of the individuals which are created using the same procedure.  We consider two rules to stop this procedure that lead to different behaviours. The model is defined as follows:
\begin{definition}[Wright-Fisher model with efficiency parametrised by $N, \kappa, s$ and $x$]\label{WFWE}  
Each generation is a collection of individuals $\{1, \dots, M_n\}$, where $M_n$ is the population size at generation $n$. Let ${\tt t}(n,i)\in\{0,1\}$ be the type of the i-th individual of generation $n$ and 
\[
X^{(N)}_n:=\frac{1}{M_n}\sum_{j=1}^{M_n}\mathbf{1}_{\{t(n,j)=0\}}\qquad \textrm{and}\qquad C_{(n+1, i)}=i-\kappa\sum_{j=1}^i\mathbf{1}_{\{t(n+1,j)=0\}}, 
\]
be the frequency of efficient individuals at the $n$-th generation and the cost of producing the first $i$ individuals at  generation $n+1$, respectively. 

The initial condition is given by
$M_0 = N_x=(1-\kappa x)^{-1}N,$ which is the solution of 
\begin{equation}\label{eqN}
N=(1-\kappa)xN_x+(1-x)N_x, 
\end{equation}
and $X_0 = \lfloor xN_x\rfloor / N_x$.

 Individuals in generation $n+1$ are created recursively by one of the following rules. If $C_{(n+1,i)}<N$, the $(i+1)$-th individual is produced and either
\begin{description}
\item[-] if $s = 0$, she choses her parent uniformly at random from the previous generation  or
\item[-] if $s>0$, she choses a type 0 individual as its parent with probability
$$
\frac{(1-s)X_n^{(N)}}{1-sX_n^{(N)}}.
$$
and a type 1 parent otherwise.
\end{description}
The new individual copies the type of her parent. If $C_{(n+1,i)}\geq N$ then
\begin{description}
\item[(M1)]  $M_{n+1}=i$ (no more individuals are created) or
\item[(M2)]  if $C_{(n+1,i)} = N$ then $M_{n+1}=i$ (no more individuals are created) and if $C_{(n+1,i)}> N$ then $M_{n+1}=i-1$ and  individual $(n+1,i)$ is discarded. In other words,  individual $(n+1, i)$,  is discarded if the remaining resources are insufficient to produce it.
\end{description}
\end{definition}

\begin{figure}[h]
\begin{center}
\includegraphics[width=0.8\textwidth]{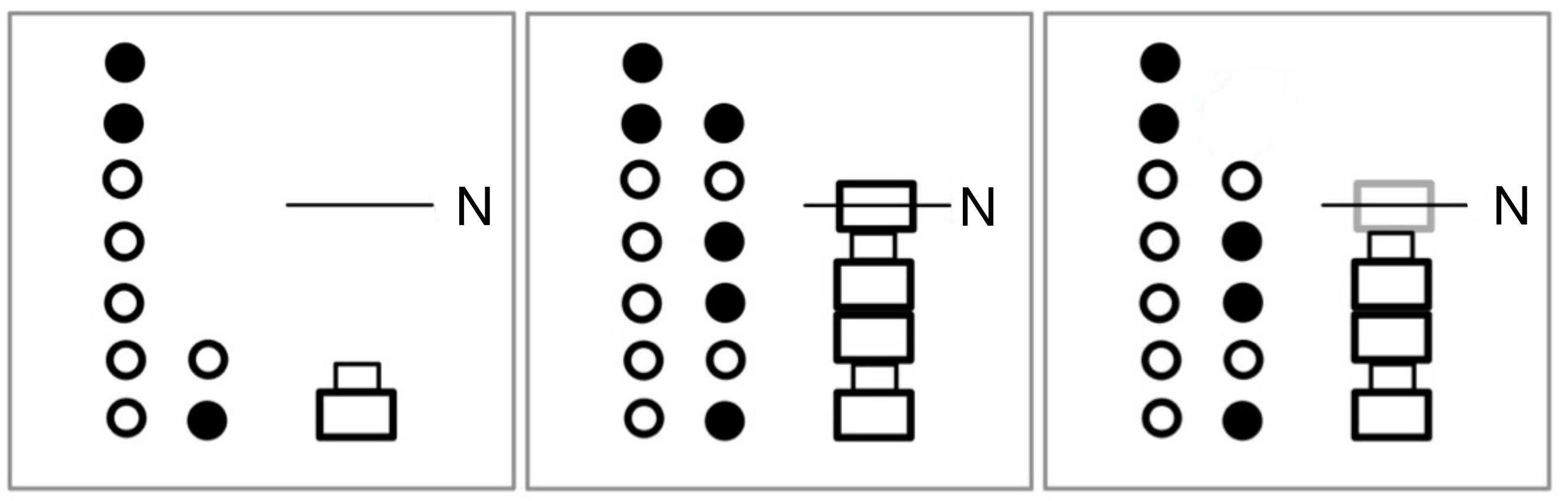}
\caption{\small  Example of how generation 1 is created in the Wright-Fisher model with efficiency with $s = 0$. Black (resp.  white) dots correspond to inefficient (resp. efficient) individuals. Generation 0 is represented by the leftmost column: $\lfloor xN_x \rfloor=5$ and $\lfloor N_x \rfloor=7$.
The small (resp. large) rectangles represent the amount of resource consumed to produce an efficient (resp. inefficient) individual and have height 1 (resp. $1-\kappa$). The horizontal line represents the total amount of resource, $N$. The second panel corresponds to {\rm {\bf (M1)}} and $M_1 = 6$. The third panel, corresponds to {\rm {\bf (M2)}} and $M_1 = 5$ (the last individual is discarded). }
\label{Fig1}
\end{center}
\end{figure}

Figure \ref{Fig1} shows how generation 1 is created in the Wright-Fisher model with efficiency {\rm {\bf (M1)} or {\rm {\bf (M2)} and Figure \ref{simu1} shows a simulation of the model. 
There is a strong coupling between the population size and the frequency of efficient individuals: when $X_n= 1$, $M_n = \lfloor N/(1-\kappa)\rfloor + 1$ under {\rm {\bf (M1)}} ( $M_n = \lfloor N/(1-\kappa)\rfloor $ under {\rm {\bf (M2)}}) and when $X_n =0$, $M_n = N$.

The main difference between the two stopping rules lays in the fact that, under {\rm {\bf (M1)}}, we can always create efficient or inefficient individuals. But, under  {\rm {\bf (M2)}}, if the amount of consumed resource is in $(N-1, N]$ we can only produce efficient individuals. Therefore, under the stopping rule {\rm {\bf (M2)}}, being efficient could be advantageous (see Section \ref{S15}).

\begin{figure}[h]
\begin{center}
\includegraphics[width=0.99\textwidth]{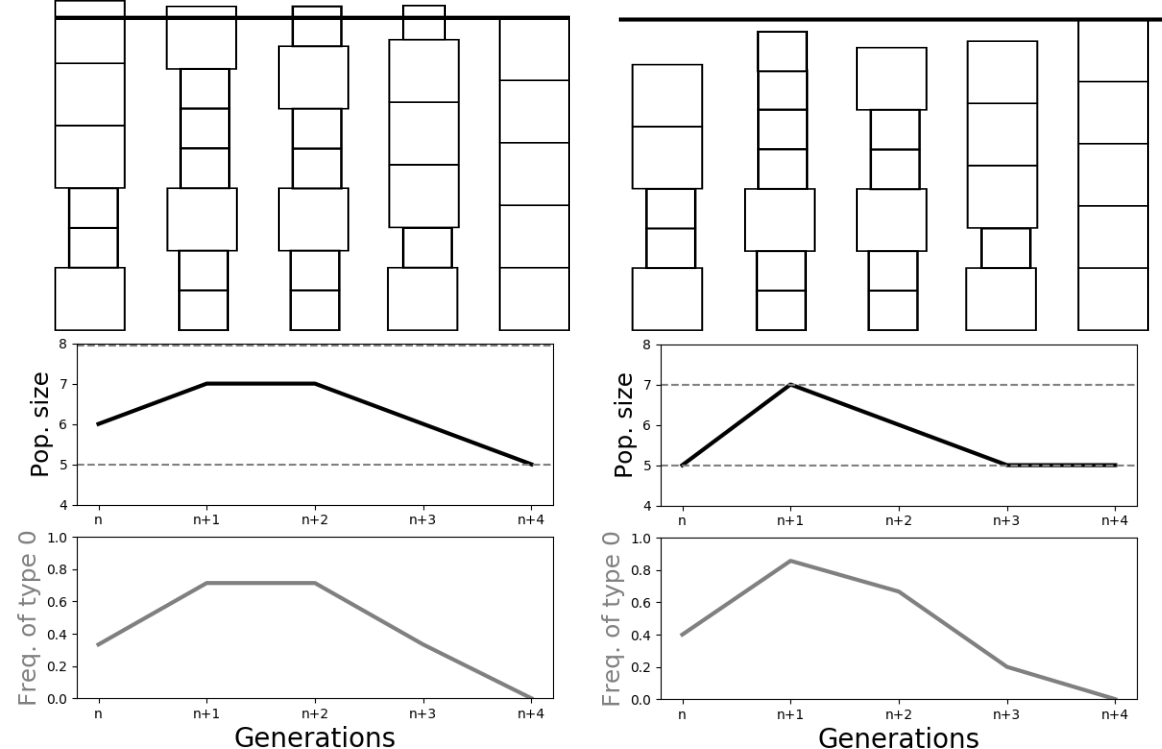}
\caption{\small  Simulations of the model ({\rm {\bf (M1)}}: left side panels and  {\rm {\bf (M2)}}: right side panels), for $N = 5$,  $\kappa = 0.3$ and $s = 0.1$. In the upper panels,  the large (resp. small) rectangles represent the amount of resource consumed to produce an  inefficient (resp. efficient) individual and the horizontal line has height $N$.  The bottom panels represent $(M_n, n\ge0)$ and $(X^{(5)}_n, n\ge 0)$ respectively. The dashed lines correspond to the bounds between which the population size can fluctuate.}
\label{simu1}
\end{center}
\end{figure}

\subsection{A diffusion-approximation under assumption {\rm {\bf (M1)}}  }
\label{S13}
Our first result (which will be proven is Section \ref{S2}) provides a scaling-limit for the Wright-Fisher model, when the population size is large and time is measured in the evolutionary scale (i.e. time is re-scaled by $N$). 


\begin{theorem}\label{thmscaling}
Fix $\kappa\in[0,1]$ and $\alpha\geq 0$. Let us consider a sequence of processes $\{(X^{(N)}_{\lfloor Nt \rfloor}, t\ge 0), N\ge 1\}$, as in Definition \ref{WFWE} under  {\rm {\bf (M1)}} and with neutral (if $s_N = \alpha/N=0$), or selective parental rule with selection coefficient  $s_N=\alpha/N>0$, and such that $X^{(N)}_{0}$ converges towards $x\in(0,1)$ in distribution.
Then the sequence  $\{(X^{(N)}_{\lfloor Nt \rfloor}, t\ge 0), N\ge 1\}$ converges weakly in the Skorokhod sense to the unique strong solution of the following stochastic differential equation (SDE)
\begin{equation}\label{selection}
\ud X_t=-\alpha X_t(1-X_t)\ud t+\sqrt{ X_t(1-X_t)(1-\kappa X_t)}\ud B_t,
\end{equation}
where  $B$ denotes a standard Brownian motion and with initial condition $X_0=x$.\end{theorem}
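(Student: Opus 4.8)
The plan is to treat $\{(X^{(N)}_n)_{n\ge0}\}$ as a Markov chain on a finite subset of $[0,1]$ and to invoke a standard diffusion-approximation theorem (convergence of generators / the martingale problem, as in Ethier and Kurtz), for which it suffices to compute the first two conditional moments of the one-generation increment $\Delta_n:=X^{(N)}_{n+1}-X^{(N)}_n$ on the evolutionary time scale and to check that the higher moments are negligible. Writing the candidate limiting generator as $\mathcal L f=\tfrac12 a(x)f''+b(x)f'$, the goal is to show, uniformly in $x\in[0,1]$, that $N\,\mathbb E[\Delta_n\mid X^{(N)}_n=x]\to b(x)=-\alpha x(1-x)$ and $N\,\mathbb E[\Delta_n^2\mid X^{(N)}_n=x]\to a(x)=x(1-x)(1-\kappa x)$, together with $N\,\mathbb E[|\Delta_n|^3\mid X^{(N)}_n=x]\to0$; combined with the assumed convergence of $X^{(N)}_0$, this yields Skorokhod convergence to the diffusion generated by $\mathcal L$, which is exactly \eqref{selection}.

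The computation rests on a structural reduction special to this model. Conditionally on $X^{(N)}_n=x$, the offspring types are i.i.d.: each new individual is efficient with probability $p=p_N(x)$, equal to $x$ in the neutral case and to $(1-s_N)x/(1-s_N x)$ under selection with $s_N=\alpha/N$. Let $\xi_1,\xi_2,\dots$ be the corresponding i.i.d. Bernoulli$(p)$ type indicators and let $C_i=\sum_{j=1}^i(1-\kappa\xi_j)$ be the cost random walk, whose i.i.d. increments have mean $\mu=1-\kappa p$. Under {\bf (M1)} the population size is the first-passage time $M:=\inf\{i:C_i\ge N\}$ and the number of efficient offspring is $Y:=\sum_{j=1}^M\xi_j$, so that $X^{(N)}_{n+1}=Y/M$. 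The key identity is that the overshoot is bounded: since each increment of $C$ is at most $1$,
\[
C_M=M-\kappa Y\in[N,N+1).
\]
Hence $M$ and $Y$ are affinely related up to a bounded renewal overshoot, $M=N+\kappa Y+O(1)$, and $X^{(N)}_{n+1}$ is, up to that $O(1)$ term, the explicit function $Y\mapsto Y/(N+\kappa Y)$ of the single renewal quantity $Y$. This reduces everything to the large-$N$ asymptotics of one first-passage problem.

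The diffusion coefficient then follows cleanly. Wald's identities give $\mathbb E[M]\sim N/\mu$ and $\mathbb E[Y]\sim pN/\mu$, while the renewal central limit theorem gives $\mathrm{Var}(M)\sim \kappa^2 p(1-p)N/\mu^3$ and hence, through $C_M\in[N,N+1)$, $\mathrm{Var}(Y)\sim p(1-p)N/\mu^3$. Applying the delta method to $Y\mapsto Y/(N+\kappa Y)$, whose derivative at the mean equals $\mu^2/N$, produces
\[
N\,\mathrm{Var}\big(X^{(N)}_{n+1}\mid X^{(N)}_n=x\big)\longrightarrow (1-\kappa x)\,x(1-x)=a(x),
\]
the factor $(1-\kappa x)$ arising precisely from the coupling between the population size and the frequency. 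The degenerate case $\kappa=0$ is the classical binomial computation, consistent by continuity.

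The drift is the delicate part and I expect it to be the main obstacle. To leading order it is governed by the selective sampling bias $p-x=-s_Nx(1-x)/(1-s_Nx)=-\tfrac{\alpha}{N}x(1-x)+o(1/N)$, which after multiplication by $N$ contributes $-\alpha x(1-x)$. However, since $\Delta_n$ has mean of size $1/N$, every $O(1/N)$ term matters, and the random stopping produces two further such contributions that a naive LLN would miss: the mean renewal overshoot $\mathbb E[C_M-N]$, which shifts $\mathbb E[Y]$ by $O(1)$ and hence $\mathbb E[X^{(N)}_{n+1}]$ by $O(1/N)$, and the convexity (Jensen) correction coming from the nonlinearity of $Y\mapsto Y/(N+\kappa Y)$ together with the inflated variance $\mathrm{Var}(Y)\sim p(1-p)N/\mu^3$. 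These must be expanded simultaneously to order $1/N$; pinning down their combined contribution, and thereby the exact drift coefficient $b(x)$ in \eqref{selection}, together with keeping the renewal estimates uniform as $x\to0,1$ (where the stopping degenerates), is the crux of the proof. The remaining ingredients are routine: exponential moment bounds for the bounded-increment walk $C$ give the negligibility of $N\mathbb E[|\Delta_n|^3]$ and the required uniformity, and uniqueness of the strong solution of \eqref{selection} follows from Yamada--Watanabe, since on $[0,1]$ the drift is Lipschitz and the dispersion $\sqrt{x(1-x)(1-\kappa x)}$ is $\tfrac12$-Hölder.
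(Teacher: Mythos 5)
Your proposal has a genuine gap, and it sits exactly where you say it does: the drift is never computed. You set up the same architecture as the paper (generator convergence for Feller processes plus pathwise uniqueness of \eqref{selection}), and your renewal representation is correct and gives the right diffusion coefficient — $C_M=M-\kappa Y\in[N,N+1)$, Wald, the renewal CLT and the delta method do yield $N\,\mathrm{Var}(X^{(N)}_1)\to x(1-x)(1-\kappa x)$. But a proof must end with $b(x)=-\alpha x(1-x)$, and carrying out the order-$1/N$ expansion you defer does not obviously produce it. Writing $\eta_i=1-\kappa\xi_i$, $\mu=1-\kappa p$ and $S_M:=C_M-\mu M$, one has the exact identity $X^{(N)}_1-p=(Y-pM)/M=-S_M/(\kappa M)$, with $\mathbb{E}[S_M]=0$ and $\mathbb{E}[S_M^2]=\kappa^2p(1-p)\mathbb{E}[M]$ by Wald's identities, and $M-\mathbb{E}[M]=\mu^{-1}\bigl((\theta-\mathbb{E}\theta)-S_M\bigr)$ with overshoot $\theta=C_M-N\in[0,1)$. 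Expanding $1/M$ around $\mathbb{E}[M]=(N+\mathbb{E}\theta)/\mu$ shows that the overshoot contributions cancel (evaluating at the Wald means gives exactly $p$), but the convexity term survives:
\begin{equation*}
\mathbb{E}\Bigl[\frac{S_M}{M}\Bigr]=-\frac{\mathbb{E}\bigl[S_M(M-\mathbb{E}M)\bigr]}{(\mathbb{E}M)^2}+O\bigl(N^{-3/2}\bigr)=\frac{\kappa^2p(1-p)}{N}+O\bigl(N^{-3/2}\bigr),
\end{equation*}
so that on your route $N\,\mathbb{E}[X^{(N)}_1-x]\to-\alpha x(1-x)-\kappa x(1-x)$, i.e.\ an extra drift $-\kappa x(1-x)$ appears. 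This is not an artifact of sloppy bookkeeping: for $\kappa=1$ one has $M=N+Y$ with $Y$ negative binomial and the delta method rigorously gives $\mathbb{E}[X^{(N)}_1]-p=-p(1-p)/N+O(N^{-3/2})$, and exact enumeration at small $N$ (e.g.\ $N=1$, $\kappa=1/2$, neutral: $\mathbb{E}[X^{(N)}_1]=p(1+p)/2$) confirms a negative bias of this order. So you cannot close the proof by exhibiting a cancellation — on your decomposition there is none — and the gap you flag as ``the crux'' is where your method and the theorem's statement come into direct conflict.

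It is worth seeing how the paper avoids this. Its proof never does the renewal analysis: it proves concentration of $M_1$ around $N_x=N/(1-\kappa x)$ (Proposition \ref{LLN}) and then asserts that, conditionally on $X^{(N)}_0=x$ and $M_1=m$, the variable $mX^{(N)}_1$ is binomial with parameters $m$ and $(1-s_N)x/(1-s_Nx)$; under that assertion $\mathbb{E}[X^{(N)}_1\mid M_1]\equiv p$ exactly, the drift reduces to the sampling bias $N(p-x)\to-\alpha x(1-x)$, and the conditional variance $p(1-p)/m$ combined with $N/m\to 1-\kappa x$ gives the diffusion coefficient. Your representation shows this conditional claim is false as stated: the event $\{M_1=m\}$ forces $C_{(1,m)}\in[N,N+1)$, hence pins $\sum_{j\le m}\mathbf{1}_{\{t(1,j)=0\}}$ into an interval of length $1/\kappa$, i.e.\ to at most $\lceil 1/\kappa\rceil$ values — nearly deterministic, not binomial. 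By the law of total variance the binomial shortcut happens to reproduce the same total variance as your (correct) computation, which is why the two approaches agree on $a(x)$; but for the mean the shortcut silently discards precisely the $O(1/N)$ covariance between $Y-pM$ and $1/M$ that your expansion exposes. In short: your approach is genuinely different from and more faithful to the model's dependence structure than the paper's, your instinct that the drift is the delicate point is exactly right, but as submitted the proposal does not prove the theorem — and completing it honestly forces you to confront that the renewal expansion yields a drift term absent from \eqref{selection}, rather than the routine cancellation your plan hopes for.
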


The SDE \eqref{selection} already appeared in \cite{GPP}.
Such unique strong solution, we call it  the {\it Wright-Fisher diffusion with efficiency {\rm {\bf (M1)}}} and from now on, it is denoted by $(X_t, t\geq0)$. 
 Figure \ref{simuM1} shows some simulated trajectories of $(X_t, t\geq0)$ and $(X^{(N)}_{\lfloor Nt \rfloor}, t\ge 0)$.

To obtain the diffusion approximation, time was re-scaled by $N$ and we assumed that the selection coefficient $s_N$ is of order $1/N$ (as in the classical Wright-Fisher model with selection). However, no re-scaling of the efficiency parameter  $\kappa$ is needed.



\begin{figure}[h]
\begin{center}
\includegraphics[width=1.0\textwidth]{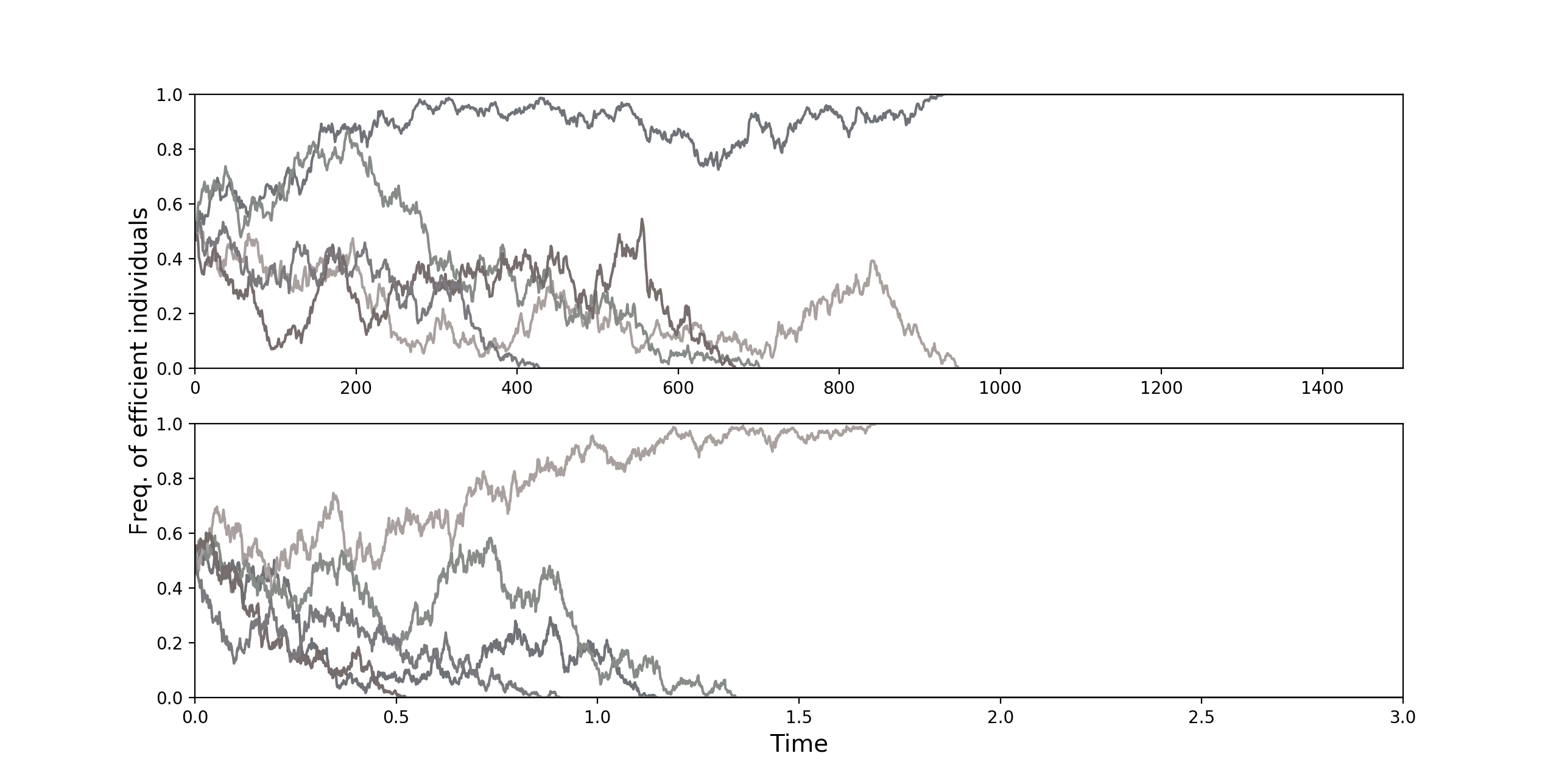}
\caption{\small The upper panel shows some simulations of Wright-Fisher model with efficiency and the bottom panel some trajectories of the Wright-Fisher diffusion with efficiency {\rm {\bf (M1)}}.  The parameters are $N = 500$, $x = 0.5$, $\kappa = 0.3$ and $\alpha = 0.1$. In the upper panel, time is expressed in generations. In the bottom panel, 1 time-unit corresponds to $N$ generations.}
\label{simuM1}
\end{center}
\end{figure}

The extra factor $(1-\kappa X_t)$ in the infinitesimal variance is the main contribution of efficiency. Roughly speaking this term appears since, when the frequency of efficient individuals is $x$, the population size remains close to $N_x = N(1-\kappa x)^{-1}$ (Proposition \ref{LLN} in Section \ref{S2})).

The diffusion $(X_t, t\geq0)$  has two boundaries, $0$ and $1$. We will show in Section \ref{S2} that when $\kappa \in [0,1)$ these boundaries are accessible (i.e. that they can be reached in finite time). We will also prove the following result.
\begin{proposition}\label{T} Let $X=(X_t, t\ge 0)$ be the unique strong solution of \eqref{selection} and define the time to fixation as $$T_{0,1}=\inf\{t\ge 0: X_t=0 \textrm{ or } 1\}.$$
Then, 
\begin{itemize}
\item[i)] if $\kappa<1$ and $\alpha=0$, the expected time to fixation is given by
\begin{equation}\label{Tfix0}
\mathbb{E}[T_{0,1} | X_0 = x]=2x\log \left(\frac{1-\kappa x}{x(1-\kappa)}\right)-\frac{2(1-x)}{1-\kappa}\log \left(\frac{1-x}{1-\kappa x}\right),
\end{equation}
\item[ii)] if $\alpha>0$ and $\kappa<1$, then 
$ \mathbb{E}[T_{0,1} | X_0 = x]<\infty,$ 
\item[iii)] if $\kappa=1$ then $ \mathbb{E}[T_{0,1} | X_0 = x]=\infty$.
\end{itemize}
\end{proposition}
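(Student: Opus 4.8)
The plan is to read $u(x):=\mathbb{E}[T_{0,1}\mid X_0=x]$ as the expected exit time from $(0,1)$ of the one‑dimensional diffusion with generator $\mathcal{L}=\frac12\sigma^2(x)\,\partial_{xx}+b(x)\,\partial_x$, where $b(x)=-\alpha x(1-x)$ and $\sigma^2(x)=x(1-x)(1-\kappa x)$, and to solve the boundary value problem $\mathcal{L}u=-1$, $u(0)=u(1)=0$ via scale and speed. Since $2b/\sigma^2=-2\alpha/(1-\kappa x)$, the scale density is
\[
s(x)=\exp\Big(\int_0^x \tfrac{2\alpha}{1-\kappa y}\,\mathrm{d}y\Big)=(1-\kappa x)^{-2\alpha/\kappa}\qquad(\kappa>0),
\]
with $s\equiv1$ when $\alpha=0$, and the speed density is $m(x)=2/(\sigma^2(x)s(x))$. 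Writing $S(x)=\int_0^x s$, the exit time admits the Green representation $u(x)=\int_0^1 G(x,\xi)\,m(\xi)\,\mathrm{d}\xi$ with $G(x,\xi)=(S(x\wedge\xi)-S(0))(S(1)-S(x\vee\xi))/(S(1)-S(0))$, valid whenever both endpoints are accessible. All three parts reduce to analysing this integral.

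For (i), $\alpha=0$ gives $s\equiv1$, $S(x)=x$, so $G(x,\xi)=x(1-\xi)$ for $x\le\xi$ and $\xi(1-x)$ for $\xi\le x$, whence
\[
u(x)=2(1-x)\int_0^x\frac{\mathrm{d}\xi}{(1-\xi)(1-\kappa\xi)}+2x\int_x^1\frac{\mathrm{d}\xi}{\xi(1-\kappa\xi)}.
\]
Both integrals are elementary by partial fractions, using $\frac{1}{(1-\xi)(1-\kappa\xi)}=\frac{1}{1-\kappa}\big(\frac{1}{1-\xi}-\frac{\kappa}{1-\kappa\xi}\big)$ and $\frac{1}{\xi(1-\kappa\xi)}=\frac1\xi+\frac{\kappa}{1-\kappa\xi}$; they produce $\frac{1}{1-\kappa}\log\frac{1-\kappa x}{1-x}$ and $\log\frac{1-\kappa x}{(1-\kappa)x}$ respectively. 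Substituting and recognising $-\log\frac{1-x}{1-\kappa x}=\log\frac{1-\kappa x}{1-x}$ yields exactly \eqref{Tfix0}; this step is routine once the representation is in place.

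For (ii), with $\alpha>0$ and $\kappa<1$ the factor $1-\kappa x$ lies in $[1-\kappa,1]$, so $s$ and $1/s$ are bounded on $[0,1]$; hence $S$ is bi-Lipschitz, $S(0),S(1)$ are finite, and $m(\xi)\asymp 1/(\xi(1-\xi))$. Near $0$ one has $S(\xi)-S(0)\asymp\xi$ and $G(x,\xi)\asymp\xi$, so $G(x,\xi)m(\xi)$ stays bounded and the integral converges; symmetrically near $1$, $S(1)-S(\xi)\asymp 1-\xi$ forces $G(x,\xi)m(\xi)$ bounded there as well. As the integrand is continuous on the open interval, $u(x)<\infty$.

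Part (iii) is where the real work lies. At $\kappa=1$ the diffusion coefficient $\sigma^2(x)=x(1-x)^2$ acquires a \emph{double} zero at $1$: writing $Y=1-X$, near $0$ the equation reads $\mathrm{d}Y\approx\alpha Y\,\mathrm{d}t-Y\,\mathrm{d}B$, a geometric-type diffusion that never reaches $0$, so $1$ is an inaccessible (natural) boundary and $T_{0,1}=T_0$. To obtain $\mathbb{E}_x[T_0]=\infty$ I would use the Green representation with $0$ absorbing and no flux at the natural boundary $1$, namely $\mathbb{E}_x[T_0]=\int_0^1\big(S(x\wedge\xi)-S(0)\big)m(\xi)\,\mathrm{d}\xi$, and isolate the contribution $\big(S(x)-S(0)\big)\int_x^1 m(\xi)\,\mathrm{d}\xi$. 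In the neutral case this is transparent: $S=\mathrm{id}$ and $m(\xi)\sim 2/(1-\xi)^2$ near $1$, so $\int_x^1 m(\xi)\,\mathrm{d}\xi=\infty$ and the mean is infinite. The \emph{main obstacle} is precisely this boundary analysis at $1$: the decisive quantity is whether a neighbourhood of $1$ carries infinite speed‑measure mass against the Green kernel, i.e. whether the return time from near $1$ has infinite mean, and this hinges on the interplay between the strength of the double zero of $\sigma^2$ and the scale function built from the drift. Controlling that interplay is the delicate point, whereas parts (i)--(ii) follow directly from the scale/speed representation.
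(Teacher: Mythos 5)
Your parts (i) and (ii) are correct and follow essentially the same route as the paper: the paper also obtains (i) by integrating the Green function \eqref{green} (which is exactly your $G(x,\xi)m(\xi)$ with $S=\mathrm{id}$) and proves (ii) by checking integrability of the Green function near both boundaries, using that $1-\kappa u$ is bounded below when $\kappa<1$.

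The genuine gap is part (iii) for $\alpha>0$, which you settle only in the neutral case and otherwise leave open. The paper closes it via the near-$1$ asymptotic of the two-sided Green function, $G(x,u)\sim \frac{2(S(x)-S(0))}{S(1)-S(0)}\frac{1}{u(1-\kappa u)}$, which at $\kappa=1$ is of order $(1-u)^{-1}$, hence non-integrable. But note that this asymptotic, and the normalisation by $S(1)-S(0)$, presuppose $S(1)<\infty$; at $\kappa=1$ one has $S(x)=K\bigl(1-(1-x)^{1-2\alpha}\bigr)$, so $S(1)<\infty$ exactly when $\alpha<1/2$ (the case $\alpha=1/2$ can be added by hand, since then $m(\xi)=\frac{2}{\xi(1-\xi)}$ and $\int_x^1 m=\infty$).

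In fact your hesitation about the boundary analysis at $1$ is not just prudent, it is decisive. Since $1$ is inaccessible (your Lemma \ref{propo1}-type argument), $T_{0,1}=T_0$ a.s., and letting $b\uparrow 1$ in $\mathbb{E}_x[T_{0}\wedge T_b]$ legitimately yields your one-sided representation $\mathbb{E}_x[T_0]=\int_0^1 S(x\wedge\xi)\,m(\xi)\,\ud\xi$. At $\kappa=1$ the speed density is $m(\xi)=\frac{2(1-\xi)^{2\alpha-2}}{\xi}$, so for $\alpha>1/2$ the mass near $1$ is \emph{finite}, $\int_0^x S(\xi)m(\xi)\,\ud\xi$ also converges (the integrand tends to $2$ at $0$), and hence $\mathbb{E}_x[T_0]<\infty$. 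This matches Theorem \ref{thm2}: $\alpha=1/2$ is the transience threshold of the dual, and for $\alpha>1/2$ the diffusion is swept to $0$ (Corollary \ref{corothm2}). So the step you flagged cannot be completed as stated for all $\alpha\ge 0$: your method, carried out, proves (iii) for $\alpha\le 1/2$ but gives the opposite conclusion for $\alpha>1/2$, and the paper's own proof tacitly uses $S(1)<\infty$ at precisely this point.
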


When $\kappa \in [0,1)$ absorption occurs in finite time. 
If $\alpha = 0$, $(X_t, t\geq 0)$ is a martingale, so the probability of fixation of the efficient individuals is equal to their initial frequency (as in the classical Wright-Fisher model). In other words, there is no advantage or disadvantage at the population level in being efficient. 
But the time to absorption increases with $\kappa$. This is not surprising, since the presence of inefficient individuals reduces the population size. 


 Let us denote by $(Y_t, t\geq0)$ the process $Y = 1-X$, which corresponds to the (limiting) frequency of inefficient individuals.
Define $\p_{y}\big(\{{\rm Fix.}\}\big)=\p(Y_{T_{0,1}}=1 | Y_0 = y)$,  the probability of fixation of the inefficient individuals starting from $y\in (0,1)$.
When $\alpha>0$ and $\kappa=0$,  it is well known (see for instance Lemma 5.7 in \cite{eth}) that the probability of fixation of type 1 individuals is given by
$$ \p_{y}\big(\{{\rm Fix.}\}\big)=\frac{1-\exp\{-2\alpha y\}}{1-\exp\{-2\alpha\}}.$$
\begin{proposition}
The probability of fixation of the inefficient individuals in the Wright-Fisher diffusion with efficiency  {\rm {\bf (M1)}}, parametrised by $\kappa \in (0,1)$ and $\alpha\geq0$ is given by
\begin{eqnarray*}\p_{y}(\{{\rm Fix.}\}) 
&=&\left\{ \begin{array}{ll}
1-C_{\kappa, \alpha}(1-(1-\kappa (1-y))^{1-\frac{2\alpha}{\kappa}}) & \textrm{if}\quad 2\alpha\ne\kappa,\\
1-C_{\kappa, \alpha}\ln \left(\frac{1}{1-\kappa (1-y)}\right)& \textrm{if} \quad 2\alpha=\kappa,\\
\end{array}
\right.
\end{eqnarray*}
where $C_{\kappa, \alpha}$ is such that
 $$
C_{\kappa, \alpha}^{-1}=\left\{ \begin{array}{ll}
1-(1-\kappa)^{1-\frac{2\alpha}{\kappa}} & \textrm{if}\quad 2\alpha\ne\kappa,\\
\ln \left(\frac{1}{1-\kappa}\right) & \textrm{if} \quad 2\alpha=\kappa.
\end{array}
\right.
$$ 
\label{propo1new}
\end{proposition}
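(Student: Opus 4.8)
The plan is to reduce the computation to the scale function of the one-dimensional diffusion $(X_t,t\ge 0)$ solving \eqref{selection}, and to read off the hitting probability from the classical formula for regular diffusions. Since fixation of the inefficient individuals corresponds to the event that $X$ reaches $0$ before $1$, and since $Y_0=y$ means $X_0=1-y$, I would rewrite the target quantity as $\p_{y}(\{\mathrm{Fix.}\})=\mathbb{P}_{1-y}(X \text{ hits } 0 \text{ before } 1)$ and work entirely with $X$.

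First I would identify the drift $b(x)=-\alpha x(1-x)$ and the diffusion coefficient $\sigma^2(x)=x(1-x)(1-\kappa x)$, and compute the ratio $2b(x)/\sigma^2(x)=-2\alpha/(1-\kappa x)$, where the factors $x(1-x)$ cancel cleanly. This cancellation is the key simplification that makes the problem tractable: the selection term, which is tied to the $x(1-x)$ factor, interacts with the efficiency term only through the surviving factor $(1-\kappa x)$. The scale density is then $s(x)=\exp\big(-\int 2b/\sigma^2\,\ud x\big)=(1-\kappa x)^{-2\alpha/\kappa}$, and the scale function is $S(x)=\int_0^x (1-\kappa u)^{-2\alpha/\kappa}\,\ud u$ with the convention $S(0)=0$.

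Next I would integrate explicitly, distinguishing the two cases according to whether the exponent $-2\alpha/\kappa$ equals $-1$. When $2\alpha\ne\kappa$ the antiderivative is a power, giving $S(x)$ proportional to $1-(1-\kappa x)^{1-2\alpha/\kappa}$ up to a multiplicative constant that will cancel in the final ratio; when $2\alpha=\kappa$ the integrand reduces to $(1-\kappa u)^{-1}$, so that $S(x)$ is proportional to $\ln(1/(1-\kappa x))$. Invoking that $0$ and $1$ are accessible and absorbing for $\kappa\in(0,1)$ (established in Section \ref{S2}, cf. Proposition \ref{T}), the standard scale-function identity yields $\mathbb{P}_{1-y}(X \text{ hits } 0 \text{ before } 1)=(S(1)-S(1-y))/(S(1)-S(0))$. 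Substituting $S(0)=0$, evaluating $S(1-y)$ at $1-\kappa(1-y)$ and $S(1)$ at $1-\kappa$, and simplifying, recovers the two stated expressions, with the normalising constant $C_{\kappa,\alpha}$ emerging as the reciprocal of $S(1)-S(0)$ in the chosen normalisation, i.e. exactly the quantity displayed in the statement.

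The computational steps above are routine; the point requiring care is the justification that the scale-function formula applies despite the degeneracy of $\sigma^2$ at both endpoints. Since the diffusion coefficient vanishes at $0$ and $1$, I must rely on accessibility of the boundaries for $\kappa\in(0,1)$ (which guarantees $T_{0,1}<\infty$ almost surely, so that the hitting event has probability one of occurring and the formula is meaningful) together with the fact that both boundaries are absorbing. Granting this, the existence and uniqueness of the strong solution from Theorem \ref{thmscaling} ensures that $(X_t,t\ge 0)$ is a well-defined regular diffusion on the open interval $(0,1)$, to which the scale-function argument applies verbatim, and the two-case simplification completes the proof.
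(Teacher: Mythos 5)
Your proposal is correct and follows essentially the same route as the paper's own proof: the paper likewise writes $\p_{y}(\{\mathrm{Fix.}\})=\p(X_{T_{0,1}}=0\,|\,X_0=1-y)=\bigl(S(1)-S(1-y)\bigr)/\bigl(S(1)-S(0)\bigr)$, invoking accessibility of both boundaries and the scale function $S(x)\propto 1-(1-\kappa x)^{1-\frac{2\alpha}{\kappa}}$ (resp.\ $\propto\ln\bigl(1/(1-\kappa x)\bigr)$ when $2\alpha=\kappa$) computed in the proof of Proposition \ref{T}, exactly as you do. The only cosmetic slip is your citation: accessibility for $\kappa\in(0,1)$ is established in Lemma \ref{propo1}, not Proposition \ref{T} (which gives finiteness of $\mathbb{E}[T_{0,1}]$), but the substance of your argument is identical to the paper's.
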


The proof of this Proposition can be found in Section \ref{S2}.
 \begin{figure}\begin{center}
\includegraphics[width=1.0\textwidth]{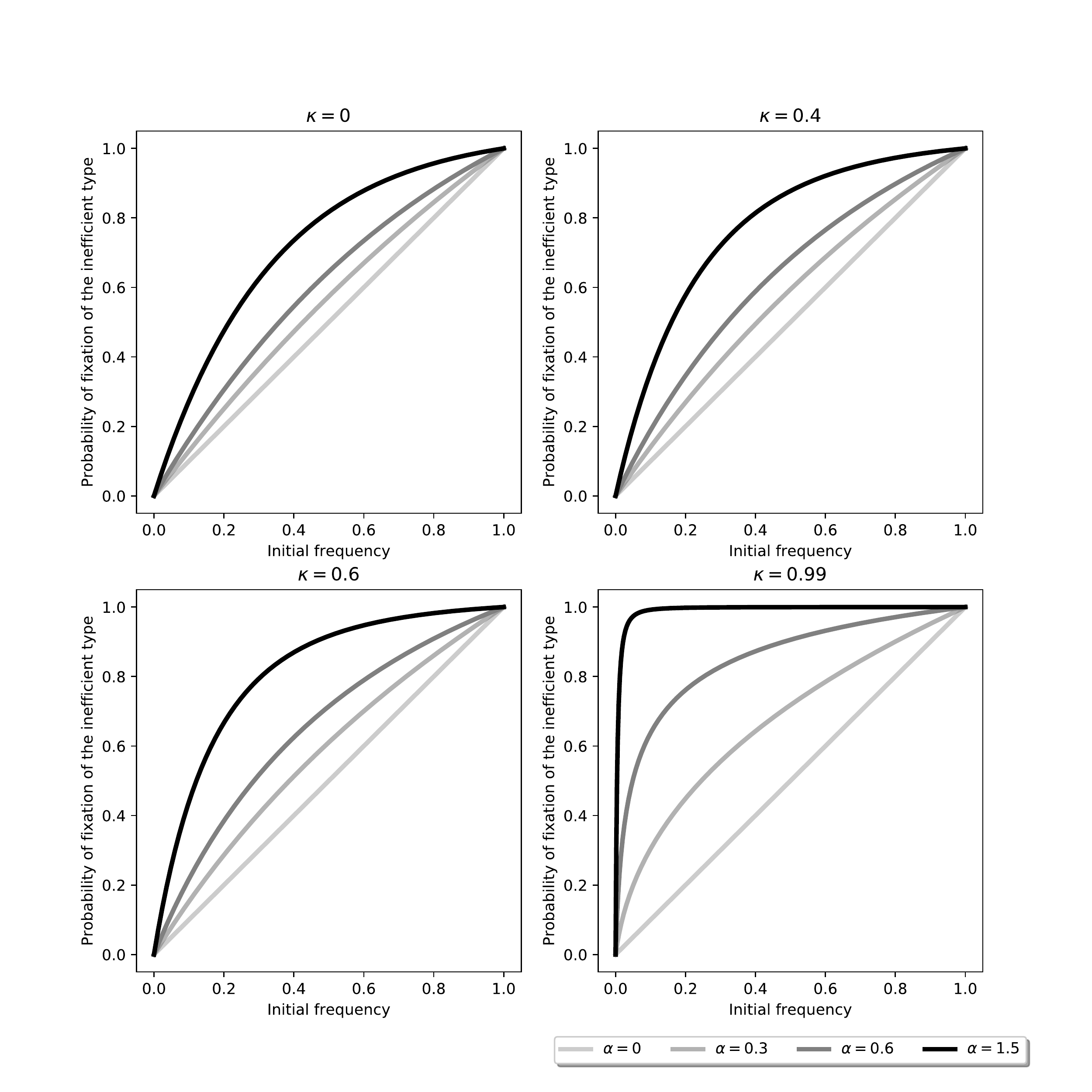}
\caption{\small Probability of fixation of inefficient individuals for the Wright-Fisher diffusion with efficiency (stopping rule  {\rm {\bf (M1)}}), for several combinations of  $\kappa$ and $\alpha$. }
\label{ffixation}
\end{center}
\end{figure}
As one can see in Figure \ref{ffixation}, for a fixed value of $\alpha$,  the probability of fixation of inefficient individuals increases when $\kappa$ increases. In other words, at the population level, being inefficient provides an advantage, since it increases the probability of fixation (compared to the classical Wright-Fisher model with the same selection coefficient).

\medskip

Finally, we consider the case $\kappa = 1$, which exhibits some interesting mathematical properties. It corresponds to the case where the cost of inefficient individuals is orders of magnitude larger than the cost of efficient individuals, in such a way that the efficiency parameter $\kappa_N$ tends to 1 as $N$ goes to infinity.
In that case, $0$ is accessible but $1$ is not.
However, it is still possible that  $X$ takes the value 1 in the limit. In other words, the efficient individuals might still ``go to fixation'' after an infinite time.
The first term of the right hand side of \eqref{Tfix0}, which is related to absorption at  $1$, goes to infinity when $\kappa$ goes to 1. But the second term, which is related to absorption in 0, converges to a finite value. 
The path behaviour is not easy to study using the classical theory of diffusions. Instead, we can use moment duality.

\subsection{A genealogical process associated to the Wright-Fisher diffusion with efficiency  {\rm {\bf (M1)}}}
\label{introASEG}
The {\it Ancestral Selection/Efficiency Graph}, that we introduce below, describes the genealogical structure associated to the Wright-Fisher model with efficiency  {\rm {\bf (M1)}}. It can be seen as an extension of the Ancestral Selection Graph (ASG) defined in \cite{KN1, KN2}.
\begin{definition}\label{AEG1}
Fix $n\in \mathbb{N}, \kappa\in[0,1]$,  $\alpha \ge 0$. The random marked directed graph $\mathcal{G}^x_T$, with parameters $T>0$ and $x\in[0,1]$, that we call  the  Ancestral  Selection/Efficiency Graph (ASEG for short), is  a continuous-time Markov process that can be constructed as follows. Let $Z_t$ denote the number of active vertices at time $t$ and assume that $Z_0 = n$. If $Z_t = j$,  then 
\begin{enumerate}
\item[(i)] (Coalescence event) at rate $j(j-1)/2$ two uniformly chosen active vertices become inactive and produce a new active vertex which is connected to both of them,
\item[(ii)]  (Branching event)  at rate $\alpha j$ an uniformly chosen active vertex becomes inactive and produces two new active vertices which are connected to it,
\item[(iii)] (Pairwise branching event) at rate $\kappa j(j-1)/2$ an uniformly chosen active vertex becomes inactive and produces two new active vertices which are connected to it,
\item[(iv)] (Coloring the tips) at time $T>0$,  this procedure is stopped and each active vertex gets a type which is $0$ (efficient) with probability $x$ and $1$ (inefficient) otherwise, 
\item[(v)] (Coloring the inner vertices)  each inactive vertex is of type $0$ if and only if there is no directed path from it to any vertex of  type $1$. 
\end{enumerate}
\end{definition}

Figures \ref{figaseg} and \ref{AEG} represent the different types of events  $(i)-(iii)$ and a realization of the ASEG respectively.

\begin{figure}[h]
\begin{center}
\subfigure[coalescence]{\includegraphics[width=.14\textwidth]{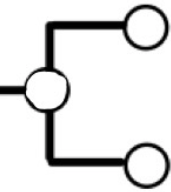}} \qquad \qquad
\subfigure[branching]{\includegraphics[width=.14\textwidth]{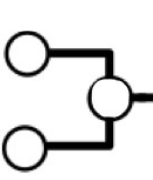}} \qquad \qquad
\subfigure[pairwise branching]{\includegraphics[width=.15\textwidth]{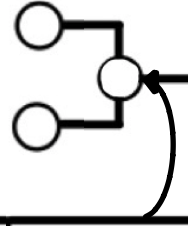}}
\caption{ \small Events of the ASEG. Pairwise branching events occur at rate proportional to $j(j-1)/2$, which corresponds to choosing a pair of vertices (connected by an arrow in this representation), but only one of them becomes inactive and produces two new vertices. }
\label{figaseg}
\end{center}
\end{figure}

The  {\it vertex counting process of the ASEG} $Z=(Z_t, t\ge 0)$ is a branching process with interactions,  with  parameters $(\alpha, 1, \kappa)$,  in the sense of \cite{GPP}. It is a birth-death process which goes from
\begin{equation*}
 j\text{ to}\left\{ \begin{array}{ll}
j+1 & \textrm{ at rate } \alpha j+\kappa \frac{j(j-1)}{2},\\
j-1, & \textrm{ at rate } \frac{j(j-1)}{2}.
\end{array}
\right .
\end{equation*}

\begin{figure}[h]
\begin{center}
\includegraphics[width=.6\textwidth]{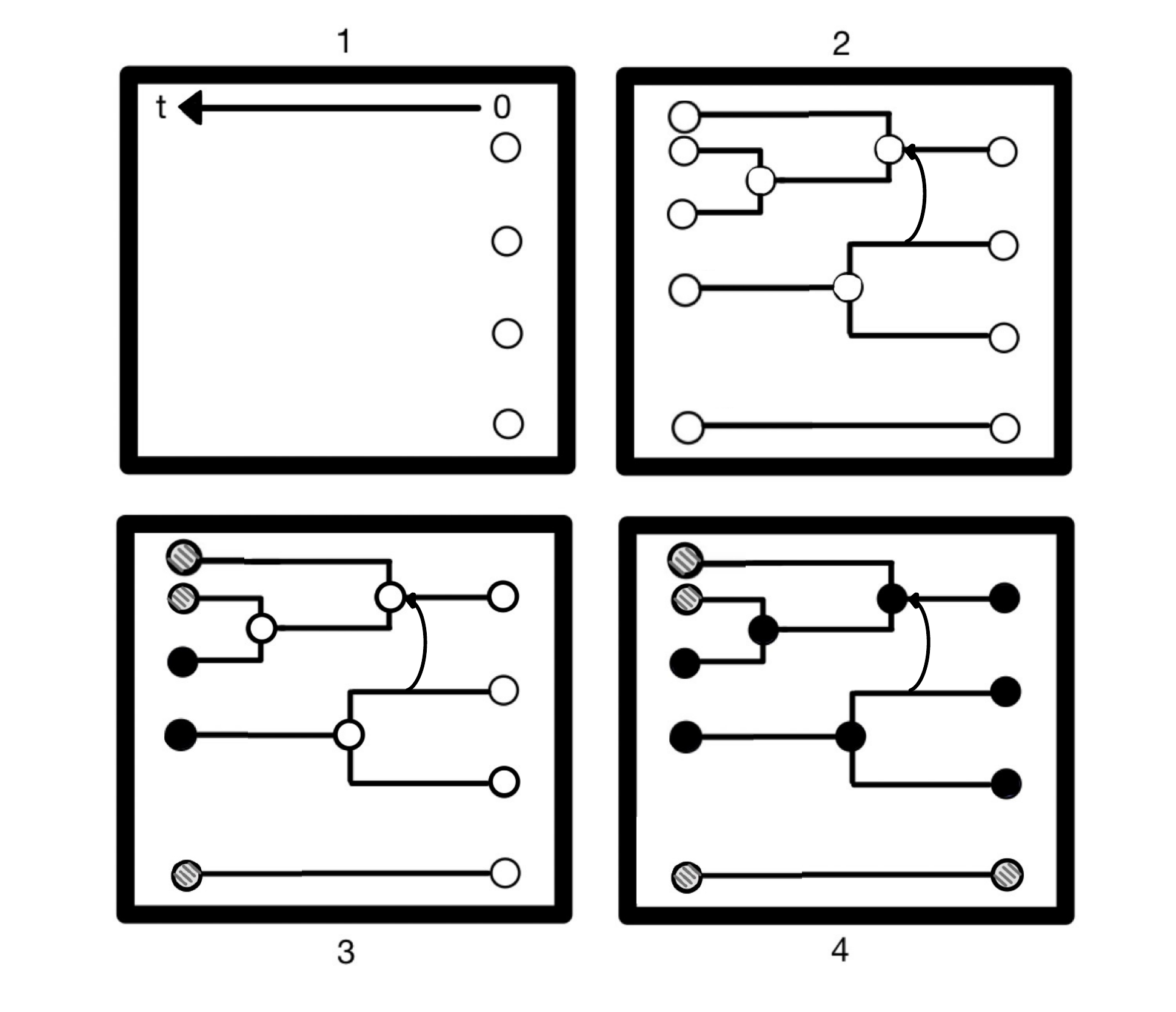}
\caption{ \small Realization of the ASEG. Panel 1 shows four active vertices at time 0. In panel 2, new vertices are produced according to rules $(i), (ii)$  and $(iii)$ of Definition \ref{AEG1}. In panel 3, external vertices are colored according to rule $(iv)$ (type 1 is represented in black and type 0 in grey). In panel 4,  individuals copy the type of their parents (the vertex to the left) following rule $(v)$.}
\label{AEG}
\end{center}
\end{figure}

\begin{lemma}
\label{Lemduality}
The Wright-Fisher diffusion with efficiency $X$ defined as the unique strong solution of \eqref{selection}, with parameters $\alpha\ge 0$ and $\kappa \in [0,1]$  and the vertex counting process of the ASEG $Z$ defined above (with the same parameters),  are moment duals, i.e. for all $x\in[0,1]$, $n\in\mathbb{N}$ and $t>0$,
$$
\mathbb{E}_{x}[X_t^n]=\mathbf{E}_n[x^{Z_t}],
$$
where $\mathbb{E}_x$ and $\mathbf{E}_n$ denote the expectations associated to the laws of $X$ and $Z$ starting from $x$ and $n$, respectively. 
\end{lemma}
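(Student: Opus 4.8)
\section*{Proof proposal for Lemma \ref{Lemduality}}

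The plan is to prove the identity at the level of the infinitesimal generators, using the duality function $H(x,n)=x^n$, and then to invoke a standard duality criterion. Write $\mathcal{L}$ for the generator of the diffusion $X$, read off from \eqref{selection},
\begin{equation*}
\mathcal{L}f(x)=-\alpha x(1-x)f'(x)+\tfrac12 x(1-x)(1-\kappa x)f''(x),
\end{equation*}
acting on $f\in C^2([0,1])$, and write $\mathcal{A}$ for the generator of the birth--death process $Z$,
\begin{equation*}
\mathcal{A}g(j)=\Bigl(\alpha j+\kappa\tfrac{j(j-1)}{2}\Bigr)\bigl(g(j+1)-g(j)\bigr)+\tfrac{j(j-1)}{2}\bigl(g(j-1)-g(j)\bigr).
\end{equation*}
The core of the proof is the generator identity $\mathcal{L}_xH(x,n)=\mathcal{A}_nH(x,n)$, where $\mathcal{L}_x$ acts on the first variable and $\mathcal{A}_n$ on the second variable of $H(x,n)=x^n$.

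To establish this I would simply apply each generator to $H$ and compare. Using $f(x)=x^n$, $f'(x)=nx^{n-1}$, $f''(x)=n(n-1)x^{n-2}$ and expanding $(1-x)(1-\kappa x)=1-(1+\kappa)x+\kappa x^2$, one finds
\begin{equation*}
\mathcal{L}_xH(x,n)=\alpha n\bigl(x^{n+1}-x^n\bigr)+\tfrac{n(n-1)}{2}\bigl(\kappa x^{n+1}-(1+\kappa)x^n+x^{n-1}\bigr).
\end{equation*}
On the other side, since $g(j)=x^j$ gives $g(j\pm1)-g(j)=x^{j}(x^{\pm1}-1)$, a direct expansion of $\mathcal{A}_nH$ produces exactly the same polynomial in $x$, so that matching the coefficients of $x^{n-1}$, $x^{n}$ and $x^{n+1}$ term by term yields the identity. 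Conceptually, the coalescence rate $n(n-1)/2$ produces the $x^{n-1}$ (diffusive) term, the branching rate $\alpha n$ produces the selection drift, and the pairwise branching rate $\kappa n(n-1)/2$ produces precisely the extra $(1-\kappa X_t)$ correction in the infinitesimal variance of \eqref{selection}.

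With the generator identity in hand, the duality follows from the classical interpolation argument: realising $X$ and $Z$ on independent spaces and setting $\Phi(s)=\mathbb{E}_x\otimes\mathbf{E}_n[X_s^{Z_{t-s}}]$ for $s\in[0,t]$, one differentiates in $s$ and uses $\mathcal{L}_xH=\mathcal{A}_nH$ to see that $\Phi'\equiv0$, so that $\Phi(0)=\mathbf{E}_n[x^{Z_t}]$ equals $\Phi(t)=\mathbb{E}_x[X_t^n]$. The step requiring care, and the main obstacle, is the rigorous justification of this interpolation, i.e.\ differentiating under the expectation and applying Dynkin's formula to each process. The boundedness of the duality function on $[0,1]$ (so that $|X_s^{Z_{t-s}}|\le1$) handles the integrability of $\Phi$ itself, but the generator terms carry the unbounded factor $j(j-1)$, so one needs control of $\mathbf{E}_n[Z_s^2]$ together with non-explosion of $Z$. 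For $\kappa<1$ this is immediate, because the death rate $j(j-1)/2$ dominates the birth rate $\alpha j+\kappa j(j-1)/2$ for large $j$, forcing $Z$ to be non-explosive with finite moments; the delicate case is $\kappa=1$, where the birth and death rates are asymptotically equal and the second moment of $Z$ must be estimated directly, for instance via a Gronwall argument applied to $t\mapsto\mathbf{E}_n[Z_t^2]$. Once these bounds are in place, the standard moment-duality theorem applies and gives $\mathbb{E}_x[X_t^n]=\mathbf{E}_n[x^{Z_t}]$ for all $x\in[0,1]$, $n\in\mathbb{N}$ and $t>0$.
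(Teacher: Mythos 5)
Your proposal is correct, and its core --- the generator identity $\mathcal{L}_xH(x,n)=\mathcal{A}_nH(x,n)$ for the duality function $H(x,n)=x^n$ --- is exactly the computation in the paper, where the two generators are denoted $\mathcal{A}$ and $\mathcal{Q}$. One remark on that computation: your diffusion generator carries the It\^o factor $\tfrac12$ in front of $f''$, whereas the paper's display \eqref{genX} omits it; however, the paper's own term-by-term expansion implicitly uses the $\tfrac12$ (it evaluates the second-derivative term of $x^n$ as $\tfrac{n(n-1)}{2}x^{n-2}$), so your version is the one consistent with the SDE \eqref{selection}, and the matching of the coefficients of $x^{n-1}$, $x^n$, $x^{n+1}$ against the rates $n(n-1)/2$, $\alpha n$ and $\kappa n(n-1)/2$ is identical on both sides. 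Where you genuinely diverge is in the passage from generator duality to semigroup duality: the paper disposes of this step by citing Proposition 1.2 of \cite{JK}, verifying its hypotheses through the observation that $x\mapsto H(x,n)$ and $x\mapsto\mathbf{E}_n[x^{Z_t}]$ are polynomials while $n\mapsto H(x,n)$ and $n\mapsto\mathbb{E}_x[X_t^n]$ are bounded; you instead run the classical interpolation argument with $\Phi(s)=\mathbb{E}_x\otimes\mathbf{E}_n[X_s^{Z_{t-s}}]$ and correctly isolate the actual obstruction, namely that the rates of $Z$ grow quadratically, so one needs non-explosion and control of $\mathbf{E}_n[Z_s^2]$. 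Your moment claims do hold: for $\kappa<1$ the net drift $\alpha j-(1-\kappa)j(j-1)/2$ is eventually negative (the paper records that $Z$ is then positive recurrent), and for $\kappa=1$ one computes $\mathcal{A}_n(j^2)=2\alpha j^2+\alpha j+j(j-1)\le C(1+j^2)$, so Gronwall yields $\sup_{s\le t}\mathbf{E}_n[Z_s^2]<\infty$, while $V(j)=j$ satisfies $\mathcal{A}_nV=\alpha V$ and rules out explosion. So the two proofs share the key lemma and differ only in packaging: the paper's citation route is shorter, whereas your self-contained route makes explicit the integrability conditions that the cited proposition black-boxes --- at the price that the Gronwall estimate, which you only sketch, should be carried out (it is one line, as above) for the argument to be complete.
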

This result is a particular case of Theorem 2 in \cite{GPP}. For the sake of completeness, in Section \ref{sectASEG} we provide its proof. Intuitively, the left-hand side corresponds to the probability that $n$ independent Bernoulli random variables with parameter $X_T$ are equal to 1. The right-hand side is the probability generating function of $Z_t$ evaluated at the initial frequency $x$ and is therefore related to the probability that the vertices in  $\mathcal{G}^x_T$ are of type 1.
Lemma \ref{Lemduality} is surprising, since we can prove analytically that the ASEG is the moment dual of the Wright-Fisher diffusion with efficiency, but there does not seem to be a transparent mapping  between the  events of the two processes.
We refer the reader to Section \ref{sectASEG} for a more detailed study of the ASEG, which allows us to understand the path behaviour of the Wright-Fisher diffusion with efficiency  {\rm {\bf (M1)}}, when $\kappa = 1$.

\subsection{A diffusion-approximation under assumption {\rm {\bf (M2)}} }
\label{S15}
Finally, we study the scaling limit of the frequency process of efficient individuals under {\rm {\bf (M2)}}, when $\kappa$ is a rational number.  
  This assumption is interesting since it has natural interpretation: $\kappa=a/b$ means that $a$ inefficient individuals can be created with the amount of resource needed to produce $b$ efficient individuals. The case when $\kappa$ is irrational seems to be more involved.
  Note that, when  $\kappa=1$, both stopping rules {\rm {\bf (M1)}} and  {\rm {\bf (M2)}} are exactly the same since, in generation $g$, we stop producing new individuals when we have produced exactly $N$ inefficient individuals (the efficient individuals do not contribute to the cost $C_{(g, i)}$)
  
\begin{theorem}\label{scaling4}
Fix $\alpha \geq 0$  and  let $\kappa$ be a rational number in  $[0,1)$.
Let us consider a sequence of processes $\{(X^{(N)}_{\lfloor Nt \rfloor}, t\ge 0), N\ge 1\}$, as in Definition \ref{WFWE}, under  {\rm {\bf (M2)}} and with neutral   ($s_N = \alpha/N  = 0$) or selective  ($\alpha  > 0$)  parental rule, with selection coefficient $s_N = \alpha/N$ and such that $X^{(N)}_{0}$ converges towards $x\in(0,1)$ in distribution. Then the sequence  $\{(X^{(N)}_{\lfloor Nt \rfloor}, t\ge 0), N\ge 1\}$ converges weakly in the Skorokhod sense to the unique strong solution of one of the following SDE's:
\begin{itemize}
\item[(i)] if $\kappa=a/b\in(0,1/2)$ for some relative primes $a,b\in\mathbb{N}$, then 
\begin{equation}\label{diff4}
\ud X_t= (-\alpha + \kappa (1-\kappa X_t))X_t(1-X_t)\ud t+\sqrt{X_t(1-X_t)(1-\kappa X_t)}\ud B_t,
\end{equation}

\item[(ii)] if $1-\kappa=1/b\in(0,1)$ for some  $b\in\mathbb{N}$, then 
\begin{eqnarray}\label{diff5}
\ud X_t&=&\left( -\alpha + (1-\kappa) (1-\kappa X_t)\sum_{r=2}^{b-1} (1-X_t^r)\right) X_t(1-X_t) \ud t\nonumber \\ 
&&+\sqrt{X_t(1-X_t)(1-\kappa X_t)}\ud B_t,
\end{eqnarray}

\item[(iii)] if $1-\kappa=a/b\in(0,1/2)$ for some relative primes $a,b\in\mathbb{N}$, then  
\begin{eqnarray}\label{diff56}
\ud X_t&=&\left( -\alpha + (1-\kappa X_t)\sum_{i=1}^{m}c_i  (1-X_t^i)\right) X_t (1-X_t)\ud t \nonumber \\ && +\sqrt{X_t(1-X_t)(1-\kappa X_t)}\ud B_t,
\end{eqnarray}
where $m=\lfloor(1-\kappa)^{-1}\rfloor$, $c_m=1-ma/b$ and $c_i=a/b$ for all $i=1,2,...,m-1$.
\end{itemize}
In the three cases $B$ denotes a standard Brownian motion and the  initial condition is  $X_0=x$.
\end{theorem}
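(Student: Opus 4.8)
The plan is to prove Theorem \ref{scaling4} by the diffusion-approximation scheme already used for Theorem \ref{thmscaling}. I would show that the generators of the rescaled chains $\{(X^{(N)}_{\lfloor Nt\rfloor},t\ge0)\}$ converge by computing the infinitesimal mean and variance of a single generational step and controlling the higher moments, and then invoke a standard weak-convergence criterion (e.g.\ the diffusion-approximation/martingale-problem results of Ethier--Kurtz) together with pathwise uniqueness of the limiting SDE. Pathwise uniqueness for \eqref{diff4}--\eqref{diff56} follows as in the M1 case from Yamada--Watanabe, since all three diffusion coefficients coincide with that of \eqref{selection} and the drifts are smooth on $[0,1]$. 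The whole problem thus reduces to identifying, uniformly on compacts of $(0,1)$, the limits of $b_N(x):=N\,\mathbb{E}[X^{(N)}_{n+1}-x\mid X^{(N)}_n=x]$ and $a_N(x):=N\,\mathbb{E}[(X^{(N)}_{n+1}-x)^2\mid X^{(N)}_n=x]$, and verifying that the third conditional moment is $o(1/N)$.

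The key structural observation is that, conditionally on $X^{(N)}_n=x$, a generation is built as a renewal process: each produced individual is type $0$ independently with probability $q_N(x)$ (equal to $x$ in the neutral case and to $(1-s_N)x/(1-s_N x)$ otherwise, so that $N(q_N(x)-x)\to-\alpha x(1-x)$), and consumes $1-\kappa$ or $1$ units of resource. Writing $K_{n+1}$ for the number of type-$0$ individuals and $M_{n+1}$ for the population size, the cost identity of Definition \ref{WFWE} gives the exact relation $M_{n+1}-\kappa K_{n+1}=N-R$, with $R\in[0,1)$ the leftover resource at the budget endpoint and $X^{(N)}_{n+1}=K_{n+1}/M_{n+1}$. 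Here the hypothesis that $\kappa=a/b$ is rational is essential: all costs then lie on the lattice $\tfrac1b\mathbb{Z}$ and $N$ is a lattice point, so the lattice renewal theorem supplies an explicit limiting stationary law for $R$ and for the type of the individual at the budget endpoint; the irrational case is excluded precisely because this lattice structure is lost. The diffusion coefficient is then the easy half: the leading Gaussian fluctuations of $K_{n+1}$ (renewal CLT), propagated through the ratio via $M_{n+1}=\kappa K_{n+1}+N-R$ and the concentration of $M_{n+1}$ near $N(1-\kappa x)^{-1}$ (Proposition \ref{LLN}), give $a_N(x)\to x(1-x)(1-\kappa x)$, the same coefficient as in \eqref{selection}. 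Coupling the M1 and M2 constructions on the same parental draws (they differ by at most the single overshooting individual, so $X^{(N),\mathrm{M2}}_{n+1}-X^{(N),\mathrm{M1}}_{n+1}=O(1/N)$) shows that $a(x)$ is inherited verbatim from Theorem \ref{thmscaling} and that the third moment is $O(1/N^2)$.

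The drift is the crux, and where the three cases appear. Splitting $b_N(x)=N(q_N(x)-x)+N\big(\mathbb{E}[X^{(N)}_{n+1}]-q_N(x)\big)$, the first term contributes the selective drift $-\alpha x(1-x)$. The second, efficiency, term is an $O(1/N)$ bias of the ratio $K_{n+1}/M_{n+1}$ with two coupled sources at the same order: the correlation between population size and genetic composition, and the boundary bias proper to M2, namely that the discarded overshooting individual is size-biased towards the more expensive inefficient type, so its removal favours type $0$. Continuing the coupling, this term equals the boundary correction $\lim_N N\,\mathbb{E}[X^{(N),\mathrm{M2}}_{n+1}-X^{(N),\mathrm{M1}}_{n+1}]$, which I would evaluate from the joint renewal limit of $(K_{n+1},R)$: the stationary law of the residual resource dictates how the final ($<1$) unit of budget is filled, and the expected surplus of type-$0$ individuals packed there yields the efficiency drift. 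The three regimes of \eqref{diff4}--\eqref{diff56} are exactly the regimes for how many efficient individuals, each of cost $1-\kappa$, fit into a residual of size $<1$: at most one when $1-\kappa>1/2$ (case (i)), and up to $m=\lfloor(1-\kappa)^{-1}\rfloor$ otherwise (cases (ii) and (iii)). The factors $1-x^i$ are the probabilities that a run of $i$ prospective type-$0$ draws is interrupted by an inefficient one, and the weights $c_i$ encode the lattice position of the budget endpoint.

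The main obstacle is precisely this efficiency-drift computation. One must (i) determine the stationary distribution of the residual-resource chain on its finite lattice; (ii) combine the boundary/size-bias with the population-size--composition correlation consistently at order $1/N$ — here the full first-passage asymptotics are needed, a low-order (delta-method) expansion of $\mathbb{E}[K_{n+1}/M_{n+1}]$ not being conclusive on its own; and (iii) carry out the case analysis that turns the endgame combinatorics into the closed forms $\sum_i c_i(1-x^i)$. Once $b_N$ and $a_N$ are identified, tightness, identification of the limit through the associated martingale problem, and uniqueness of the solution to \eqref{diff4}--\eqref{diff56} are routine, completing the weak convergence.
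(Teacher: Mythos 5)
Your plan is correct and shares the paper's overall skeleton (generator convergence with Proposition \ref{LLN} controlling the variance term, a lattice limit law for the residual resource, and a truncated-geometric analysis of the endgame), but it routes the two key computations differently, so a comparison is worth recording. For the residual, the paper's Proposition \ref{O} does not invoke the lattice renewal theorem: it studies directly the chain $O^{(n)}_i=\lceil C_{(n,i)}\rceil-C_{(n,i)}$ on $\mathbb{D}_b=\{j/b:\,j=0,\dots,b-1\}$, a translation-invariant (doubly stochastic) lazy walk on $\mathbb{Z}/b\mathbb{Z}$, irreducible and aperiodic since $\gcd(a,b)=1$, whose stationary law is therefore \emph{uniform} on $\mathbb{D}_b$; this explicit identification — which your proposal leaves open as item (i) of your ``main obstacle'' — is precisely what yields $\mathbb{P}\bigl(N-C_{(1,\underline{M}_1(N))}\in[1-\kappa,1)\bigr)\to\kappa$ in case (i), weight $1-\kappa=1/b$ per lattice point in case (ii), and the weights $c_i$ (with $c_m=1-ma/b$ coming from the last, shorter cell) in case (iii). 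For the drift, the paper does not use your M1/M2 coupling: it computes $\mathbb{E}[X^{(N)}_1-x]$ directly by conditioning on the residual, using that the types of the first $\underline{M}_1(N)$ individuals contribute mean zero and that the number of extra efficient individuals given residual $r/b$ is a geometric truncated at the number that fit, with $\mathbb{E}[G_r]=x(1-x^r)/(1-x)$ — the source of your $1-x^i$ factors — while normalizing by $M_1\approx N_x=N(1-\kappa x)^{-1}$ via Proposition \ref{LLN}, which produces the $(1-\kappa x)$ factor; selection then enters additively exactly as you say, as in Theorem \ref{thmscaling}. Your coupling observation is accurate (under both rules individuals are created while $C_{(n+1,i)}<N$, so the two constructions differ only in keeping versus discarding the single overshooting draw, whose type is biased toward the inefficient one), and it gives a cleaner conceptual explanation of why {\bf (M1)} is driftless while {\bf (M2)} favours type $0$, besides transferring the variance and third-moment estimates verbatim from the {\bf (M1)} case, which the paper instead re-derives. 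Modulo actually carrying out the uniform-stationarity computation and the endgame case analysis — both of which your plan correctly isolates and which occupy most of the paper's Section \ref{S4} — your approach is sound and would deliver the same three SDEs.
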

This result is proven in Section \ref{S4}.
 The unique strong solution of each of the above SDEs  is called  \textit{the Wright-Fisher diffusion with efficiency  {\rm {\bf (M2)}}} and denoted  by $(X_t, t\geq0)$.
 When $\alpha = 0$, the first term in the right-hand side of the three SDEs (the ``deterministic part'') is positive. As a consequence,  the efficient individuals have some ``advantage'', in the sense that their probability of fixation is higher than their initial frequency. 
 This phenomenon can be explained by the fact that when the amount of consumed resource is in $(N-1, N]$, only efficient individuals can be produced. 
 In the three cases, when $\alpha = 0$, the diffusions obtained can be interpreted as  random time-changes of some known diffusions. To be more precise,  in all cases above we have
 \[
 X_t=\tilde X_{\int_0^t (1-\kappa X_s)\ud s}, \qquad  0\le t\le T_{0,1},
 \]
 where the $\tilde X$ is the Wright-Fisher diffusion with selection (i.e $\kappa =0$) in the case $(i)$,  or the Wright-Fisher diffusion with frequency dependent selection (see equation (1) of \cite{GS}) in the cases $(ii)$ and $(iii)$. 

  \begin{figure}\begin{center}
\includegraphics[width=1.0\textwidth]{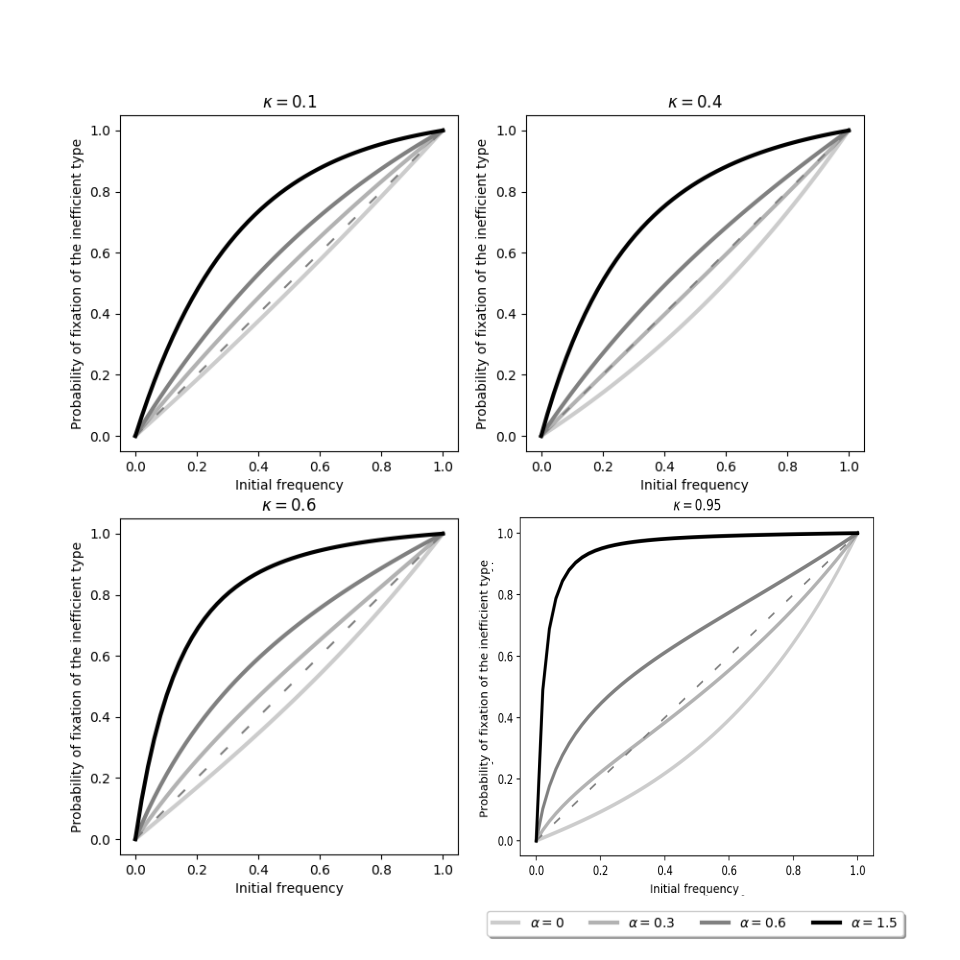}
\caption{\small Probability of fixation of inefficient individuals for the Wright-Fisher diffusion with efficiency {\rm {\bf (M2)}}, for several combinations of  $\kappa$ and $\alpha$. The dashed line corresponds to $y=x$, which is the probability of fixation in the neutral Wright-Fisher model. To compute the probability of fixation, the scale function was integrated numerically, using Scipy library for Python.}
\label{ffixation2}
\end{center}
\end{figure}
 When $s>0$, the sign of the first term in the three SDEs depends on the relative values of $\kappa, \alpha$ and $X_t$. 
  For some values of $\alpha$ and $\kappa$, the drift term is negative when $X_t$ is close to 1  and positive when $X_t$ is close to 0. This phenomenon is called \textit{balancing selection} (as selection ``pushes'' the frequency process towards intermediary values).
 
For the sake of brievity, we now focus on the case $(i)$. We prove in Section \ref{S4} that both boundaries $0$ and $1$ are accessible, that the expected time to fixation is finite  and that we have the following result.
 \begin{proposition}
 The probability of fixation of the inefficient individuals in the Wright-Fisher diffusion with efficiency  {\rm {\bf (M2)}}  parametrised by  $\kappa=a/b\in(0,1/2)$ for some relative primes $a,b\in\mathbb{N}$ and  $\alpha\ge0$ is given by
$$\p_{y}(\{{\rm Fix.}\})= \frac{\int_{1-y}^1\exp(-2\kappa u) (1-\kappa u)^{-2\alpha/\kappa} \ud u}{\int_0^1\exp(-2\kappa u) (1 -\kappa u)^{-2\alpha/\kappa} \ud u}.$$ 
\label{propoM2}
\end{proposition}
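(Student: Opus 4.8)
The plan is to compute the fixation probability through the scale function of the diffusion \eqref{diff4}, exactly as one does for the classical Wright--Fisher model with selection. First I would translate the event of interest: since $Y=1-X$, fixation of the inefficient individuals ($Y_{T_{0,1}}=1$) coincides with absorption of $X$ at $0$. Writing $x=1-y$, this means
\[
\p_y(\{{\rm Fix.}\})=\p_{x}\big(X \text{ reaches } 0 \text{ before } 1\big),\qquad x=1-y.
\]
Because both boundaries $0$ and $1$ are accessible and $T_{0,1}<\infty$ almost surely (as established in Section~\ref{S4}), $X$ is a regular one-dimensional diffusion on $(0,1)$ and its two-sided exit probability is given by the usual ratio of scale-function increments, so the whole problem reduces to computing the scale density.

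For \eqref{diff4} the drift is $b(x)=\big(-\alpha+\kappa(1-\kappa x)\big)x(1-x)$ and the diffusion coefficient is $a(x)=x(1-x)(1-\kappa x)$. The decisive simplification is that the common factor $x(1-x)$ cancels in the logarithmic derivative of the scale density:
\[
\frac{2b(x)}{a(x)}=\frac{2\big(-\alpha+\kappa(1-\kappa x)\big)}{1-\kappa x}=2\kappa-\frac{2\alpha}{1-\kappa x}.
\]
I would then integrate this elementary expression, using $\int_0^x(1-\kappa u)^{-1}\ud u=-\kappa^{-1}\ln(1-\kappa x)$, to obtain the scale density
\[
s'(x)=\exp\!\Big(-\int_0^x \tfrac{2b(u)}{a(u)}\,\ud u\Big)=\exp\!\Big(-2\kappa x-\tfrac{2\alpha}{\kappa}\ln(1-\kappa x)\Big)=e^{-2\kappa x}(1-\kappa x)^{-2\alpha/\kappa}.
\]
Setting $s(x)=\int_0^x e^{-2\kappa z}(1-\kappa z)^{-2\alpha/\kappa}\,\ud z$, so that $s(0)=0$, the standard exit formula gives
\[
\p_{x}\big(X \text{ reaches } 0 \text{ before } 1\big)=\frac{s(1)-s(x)}{s(1)-s(0)}=\frac{\int_{x}^1 e^{-2\kappa u}(1-\kappa u)^{-2\alpha/\kappa}\,\ud u}{\int_0^1 e^{-2\kappa u}(1-\kappa u)^{-2\alpha/\kappa}\,\ud u},
\]
and substituting $x=1-y$ yields precisely the claimed expression.

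The computation itself is routine, so the only point demanding care is the justification that the scale-function exit formula is legitimate even though $a(x)$ degenerates at the endpoints. Here the situation is benign: for $\kappa\in(0,1/2)$ one has $1-\kappa u\in[1-\kappa,1]\subset(\tfrac12,1]$ on $[0,1]$, so $s'$ is bounded and continuous up to the boundary and $s$ is finite everywhere, with no integrability issue. The remaining step is the appeal to the general theory of regular diffusions: since $0$ and $1$ are accessible and $T_{0,1}<\infty$ a.s., the process $(s(X_{t\wedge T_{0,1}}),t\ge0)$ is a bounded martingale, and optional stopping delivers the exit formula above. Thus the main obstacle is not the derivation but simply invoking the accessibility and finiteness results of Section~\ref{S4}, after which the identity for $\p_y(\{{\rm Fix.}\})$ follows immediately.
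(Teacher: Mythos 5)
Your proposal is correct and follows essentially the same route as the paper: both reduce fixation of the inefficient type to absorption of $X$ at $0$, compute the scale function of \eqref{diff4} (the factor $x(1-x)$ cancels, giving scale density $e^{-2\kappa u}(1-\kappa u)^{-2\alpha/\kappa}$), and apply the standard exit formula $\bigl(S(1)-S(1-y)\bigr)/\bigl(S(1)-S(0)\bigr)$, justified by the boundary accessibility and finite fixation time established in Section \ref{S4}. Your write-up merely spells out details the paper delegates to the proof of Proposition \ref{propo1new} and to Lemma 3.14 of \cite{eth}, such as the optional-stopping justification.
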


As we can observe in Figure \ref{ffixation2}, for a fixed $\kappa$, if $\alpha$ is small, $\p_{y}(\{{\rm Fix.}\})< y$. The latter means that, at the population level, efficient individuals have some advantage (as in the case  {\rm {\bf (M2)}}, with $\alpha = 0$).
 However, when $\alpha$ is large enough (for example when $\kappa = 0.4$ and $\alpha = 1.5$),  $\p_{y}(\{{\rm Fix.}\})>y$, meaning that the inefficient individuals have some advantage at the population level,  as in the case {\rm {\bf (M1)}}.
 Finally, in some cases (for example $\kappa = 0.95$ and $\alpha = 0.3$ in Figure \ref{ffixation2}), we can see the effect of the balancing selection. Indeed, when we compare the fixation probabilities to the ones of the Wright-Fisher model with selection, if the initial frequency of inefficient individuals is low, they have some advantage. On the contrary, if their initial frequency is high, they have some disadvantage.
 Recall that, to observe this effect in a finite population of size $N$, we need a small selection coefficient ($s_N = \alpha/N$) compared to the efficiency parameter $\kappa$.

 \subsection{Discussion and open problems}
 \label{S16}
To summarize, we discuss the evolutionary consequences of efficiency by using an extension of the Wright-Fisher model in which different types of individuals need different amounts of resource to reproduce.
We consider two variations of the model, depending on the rule that is used to complete each generation. Under assumption {\rm {\bf (M2)}}, efficiency provides some advantage at the population level (see Theorem \ref{scaling4}). However, the main contribution of this work is that inefficiency can be part of the advantage of an emerging trait.  
Imagine a situation in which two types of individuals with different strategies co-exist. A beneficial mutation is more likely to be fixed in the population if it arises in an inefficient individual. In other words, inefficiency acts as a promoter of selective advantage. 
This is always the case for rule {\rm {\bf (M1)}} and it is also true for rule  {\rm {\bf (M2)}} if the selection parameter is large enough (see Figures \ref{ffixation} and \ref{ffixation2}). 
This positive correlation between inefficiency and fixation probability occurs whether the beneficial mutation carried by the inefficient individuals is due to a physiological trade-off or is an independent mutation (maybe on a different gene). 

The Wright-Fisher diffusion with efficiency {\rm {\bf (M1)}} has a moment dual, which is a branching-coalescing process, the ASEG.
 This result, which generalises the ASG, comes from analytic manipulations, but still requires a transparent interpretation in terms of discrete models.
 It is possible to construct a Moran model with selection where individuals choose a certain number of potential parents and such that, if time is reverted, it contains the ASG (see \cite{Mano, Lenz, Kluth}).  However, it is not clear how we can extend this construction to the ASEG  and how to interprete the pairwise branching events. This is an interesting open question. 
In the case of assumption {\rm {\bf (M2)}}, we were not able to find an associated genealogical process. It is also an open question, possibly related to the first one.

Finally, efficiency can act as a mechanism of balancing selection, which is considered to be one of  the most powerful evolutionary forces maintaining polymorphism (see for example \cite{turelli, fitzpatrick} or \cite{brisson} for a more recent review).  
 Many different mechanisms of balancing selection have been suggested in the literature, such as within niche competition, heterozygote advantage, self-incompatibility between mating types and host-parasite or prey-predator interactions (see \cite{brisson} and the references therein), but to our knowledge this is the first work in which it arises from within-species differences in resource consumption strategies.
The consequences of these effects still have to be studied especially by means of  experiments.

\section{Scaling limit of model {\rm {\bf (M1)}} }\label{S2}
In order to prove Theorem \ref{thmscaling}, we first deduce the following proposition which  is crucial for determining the scaling limits of the Wright-Fisher model with efficiency. In particular, it says that the total number of individuals in a generation $n+1$ is close to its expectation given the frequency of efficient individuals in  generation $n$.
\begin{proposition}\label{LLN}
In the Wright-Fisher model with efficiency, with neutral parental rule ($s = 0$), and with stopping rule {\rm {\bf (M1)}} or {\rm {\bf (M2)}}, parametrised by $\kappa\in [0,1]$,
given $X^{(N)}_n=x\in[0,1]$, for every  $a\in (-1/2,0)$, we have 
$$\lim_{N\rightarrow \infty}\inf_{x\in(0,1)} \mathbb{P}\left(\frac{M_{n+1}}{N_x}\in [1-N^a,1+N^a]\right)=1.$$
\end{proposition}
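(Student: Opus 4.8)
The plan is to recognise $M_{n+1}$ as a first-passage (renewal) time of a sum of i.i.d.\ costs, and then to control its fluctuations by a second-moment (Chebyshev) estimate, keeping all constants explicit in $x$ so that the infimum over $x$ can be taken uniformly.

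First I would condition on $X^{(N)}_n = x$ and use the neutral parental rule: when $s=0$ each new individual of generation $n+1$ copies the type of a uniformly chosen parent, so the indicators $\mathbf{1}_{\{t(n+1,j)=0\}}$ are i.i.d.\ Bernoulli$(x)$. Consequently the successive cost increments $\xi_j := 1 - \kappa\,\mathbf{1}_{\{t(n+1,j)=0\}} \in \{1-\kappa,\,1\}$ are i.i.d.\ with mean $\mu := 1-\kappa x$ and variance $\sigma^2 := \kappa^2 x(1-x)$, and $C_{(n+1,i)} = \sum_{j=1}^i \xi_j =: C_i$ is a nondecreasing random walk. The stopping rule identifies $M_{n+1}$ with the first-passage time $\tau_N := \min\{i \ge 1 : C_i \ge N\}$ under \textbf{(M1)}, and with a value in $\{\tau_N-1,\tau_N\}$ under \textbf{(M2)}. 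Since $N_x = N/\mu$ is exactly $N$ divided by the mean increment, the elementary renewal heuristic already gives $M_{n+1}\approx N_x$; the content of the proposition is the quantitative rate.

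The key step is a two-sided first-passage bound. As $C$ is nondecreasing, $\{\tau_N > i\} = \{C_i < N\}$ and $\{\tau_N \le i\} = \{C_i \ge N\}$. Taking $i_\pm := \lfloor N_x(1\pm N^a)\rfloor$, one has $\mathbb{E}[C_{i_\pm}] = i_\pm\mu \approx N(1\pm N^a)$, so $|\mathbb{E}[C_{i_\pm}] - N| \gtrsim N^{1+a}$, the $O(1)$ rounding error being negligible against $N^{1+a}$ because $1+a>\tfrac12>0$. Hence the complement of $\{M_{n+1}/N_x \in [1-N^a,1+N^a]\}$ is contained, up to the single-unit discrepancy coming from \textbf{(M2)}, in $\{C_{i_+} < N\}\cup\{C_{i_-}\ge N\}$, and Chebyshev's inequality yields
$$
\mathbb{P}(C_{i_+}<N) + \mathbb{P}(C_{i_-}\ge N) \;\le\; \frac{\mathrm{Var}(C_{i_+})+\mathrm{Var}(C_{i_-})}{N^{2+2a}} \;\lesssim\; \frac{N_x\,\sigma^2}{N^{2+2a}} = \frac{\kappa^2\,x(1-x)}{1-\kappa x}\,N^{-1-2a}.
$$
Since $a>-\tfrac12$ forces $-1-2a<0$, this bound tends to $0$; note that the second moment is exactly the tool matching the stated range of $a$, and a sharper concentration inequality is neither available (increments are only bounded) nor needed.

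Finally I would address the uniformity in $x$, which is the only genuinely delicate point. The $x$-dependent prefactor is $\kappa^2 x(1-x)/(1-\kappa x)$, and for fixed $\kappa\in[0,1]$ this is bounded on $(0,1)$: for $\kappa<1$ the denominator is bounded below by $1-\kappa$, while for $\kappa=1$ it equals $\kappa^2 x\le 1$. Thus $\sup_{x\in(0,1)}[\mathbb{P}(C_{i_+}<N)+\mathbb{P}(C_{i_-}\ge N)]\to 0$, which is the claim under \textbf{(M1)}. The regime $\kappa=1$, $x\to1$ is the one to watch, since there $\mu=1-x\to0$ and $N_x=N/(1-x)\to\infty$; but the relative standard deviation $\sigma/(\mu\sqrt{N_x})$ is still $O(N^{-1/2})$ uniformly, so nothing breaks. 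For \textbf{(M2)} it suffices to note $|M_{n+1}-\tau_N|\le1$ and $1/N_x\le 1/N\ll N^a$, so the same estimate applies. The main obstacle is therefore organisational rather than conceptual: keeping the constants explicit in $x$ and $\kappa$ so that the supremum over $x$, taken \emph{before} the limit in $N$, is absorbed into the single uniform bound $\kappa^2 x(1-x)/(1-\kappa x)\le C(\kappa)$.
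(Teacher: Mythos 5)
Your proof is correct and follows essentially the same route as the paper: both reduce the complement of $\{M_{n+1}/N_x \in [1-N^a,\,1+N^a]\}$ to tail events for the i.i.d.\ Bernoulli cost walk evaluated at the deterministic indices $\lfloor N_x(1\pm N^a)\rfloor$, and conclude via Chebyshev with a variance bound uniform in $x$ (the paper uses $x(1-x)/(1-\kappa x)\le 1$, giving the same rate $N^{-1-2a}\to 0$ as your bound $\kappa^2 x(1-x)/(1-\kappa x)\le C(\kappa)$). Your first-passage-time framing and the explicit one-unit correction for {\rm {\bf (M2)}} are just a cleaner packaging of the same estimates, and are in fact slightly more careful than the paper, which writes the {\rm {\bf (M2)}} case as an exact equality of events and elides that unit discrepancy.
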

\begin{proof}Let $\{B_i; i\ge 1\}$ be a sequence of i.i.d.  Bernoulli random variables with parameter $x$. For every $a\in (-1/2,0)$, using the definition of $N_x$ \eqref{eqN}, we get
\[
\begin{split}
\mathbb{P}\Bigg(\frac{M_{n+1}}{N_x}\leq 1-N^a\Bigg)&=\mathbb{P}\Big(C_{(n+1, \lfloor N_x (1-N^a) \rfloor)}\  {\ge} \ N\Big)\\
&=\mathbb{P}\left(\lfloor N_x (1-N^a) \rfloor-\kappa\sum_{i=1}^{\lfloor N_x (1-N^a) \rfloor}B_i\  {\ge} \ N\right)\\
&\le\mathbb{P}\left(N_x\Big(\kappa x- N^{a}\Big) \  {\ge} \ \kappa\sum_{i=1}^{\lfloor N_x (1-N^a) \rfloor}B_i\right).
\end{split}
\]
Adding and subtracting  $\kappa xN_xN^a$, using \eqref{eqN} and then adding and subtracting  $\kappa x\lfloor N_x (1-N^a) \rfloor$ yields
\[
\begin{split}
&\mathbb{P}\Bigg(\frac{M_{n+1}}{N_x} \leq 1-N^a\Bigg)\le\mathbb{P}\left(\kappa xN_x(1-N^a)- N^{1+a}\  {\ge} \ \kappa\sum_{i=1}^{\lfloor N_x (1-N^a) \rfloor}B_i\right)\\
&=\mathbb{P}\left(- N^{1+a}+\kappa x\Big(N_x(1-N^a)-\lfloor N_x (1-N^a) \rfloor\Big)\  {\ge}\ \kappa\sum_{i=1}^{\lfloor N_x (1-N^a) \rfloor}B_i-\kappa x\lfloor N_x (1-N^a) \rfloor\right)\\
&\leq\mathbb{P}\left(- N^{1+a}+1 \  {\ge} \ \kappa\sum_{i=1}^{\lfloor N_x (1-N^a) \rfloor}(B_i-x)\right)\\
&\leq\mathbb{P}\left(\kappa\left|\sum_{i=1}^{\lfloor N_x (1-N^a) \rfloor}(B_i-x)\right| \  {\ge} \ N^{1+a}+1\right).
\end{split}
\]

 For the upper bound,  a similar strategy can be used to get 
  \[
\begin{split}
 \mathbb{P}\left(\frac{M_{n+1}}{N_x}\geq 1+N^a\right)&=\mathbb{P}\Big(C_{(n+1, \lfloor N_x (1+N^a) \rfloor )}<N\Big)\\
 &\le\mathbb{P}\left(N_x\Big(\kappa x+ N^{a}\Big)>-1+\kappa\sum_{i=1}^{\lfloor N_x (1+N^a) \rfloor}B_i\right)\\
 &\leq\mathbb{P}\left(\kappa\left|\sum_{i=1}^{\lfloor N_x (1+N^a) \rfloor}(B_i-x)\right|> N^{1+a}+2\right).
\end{split}
\]
Next, since $B_i-x$ is a centered random variable with $\mathbb{V}ar[B_i]=x(1-x)$, we have
\[
\begin{split}
\mathbb{V}ar\left[\sum_{i=1}^{\lfloor N_x (1-N^a) \rfloor}(B_i-x)\right]&\le \sum_{i=1}^{\lfloor N_x (1+N^a) \rfloor}\mathbb{V}ar[B_i-x]\\
&=\frac{N}{1-\kappa x} (1+N^a) x(1-x)\\
&\le N(1+N^a).
\end{split}
\]
In both cases, from  Tchebycheff's inequality we obtain
\[
\begin{split}
\lim_{N\rightarrow \infty}\sup_{x\in(0,1)} &\mathbb{P}\left(\frac{M_{n+1}}{N_x} \notin [1-N^a, 1+N^a]\right)\\
&\leq\lim_{N\rightarrow \infty}\sup_{x\in(0,1)} \mathbb{P}\left(\kappa\left|\sum_{i=1}^{\lfloor N_x (1-N^a) \rfloor}(B_i-x)\right|> N^{1+a}+1\right)\\ &\leq\lim_{N\rightarrow \infty}\frac{N(1+N^a)}{(N^{(1+a)}+1)^2}=\lim_{N\rightarrow \infty}N^{1-2(1+a)}=0,
\end{split}
\]
where we have used that $1-2(1+a)<0$ since $a\in(-1/2,0)$. The proof of this proposition now follows. 
\end{proof}
\begin{proof}[Proof of Theorem \ref{thmscaling}] Classical results for SDEs with H\"older continuous coefficients provide that  \eqref{selection} has a unique strong solution (see  for instance Theorem 2 in \cite{GPP}). 
 We denote by $\mathcal{A}$ the infinitesimal generator of the Wright-FIsher diffusion with efficiency. Its domain contains the set of continuously twice differentiable functions on $[0,1]$, here denoted by $C^2([0,1])$, and for every $f$ in $C^2([0,1])$, and for every $x \in [0,1]$, we have
 \begin{equation}\label{genX}
\mathcal{A}f(x) =-\alpha x(1-x)f'(x)+x(1-x)(1-\kappa x)f''(x).
\end{equation}

Recall that, given $X^{(N)}_0 = x$ and $M_1 = m$, $m X_1^{(N)}$ follows a binomial distribution with parameters $m$ and $(1-s_N)x/(1-s_N x)$,  using Taylor's expansions, as in the classical Wright-Fisher model with selection, we have 
\begin{eqnarray*}
N\mathbb{E}\Big[X_1^{(N)}-x | X_0^{(N)} = x, M_1 = m \Big] &=& - \alpha x(1-x) + o(1) \\
N\mathbb{E}\Big[(X_1^{(N)}-x )^2| X_0^{(N)} = x, M_1 = m \Big] &=&  \frac{N}{m} x(1-x) + o(1),
\end{eqnarray*}
where $N/m \le 1$.
If we take $f\in C^2([0,1])$, $x \in [0,1]$, $a\in(-1/2,0)$ and using Proposition \ref{LLN},  the discrete generator of $X^{(N)}_{\lfloor Nt \rfloor}$ satisfies
\[
\begin{split}
A^Nf(x):=&N\mathbb{E}\Big[f(X_1^{(N)})-f(x)| X_0^{(N)} = x\Big]\\
=&N\mathbb{E}\Big[X_1^{(N)}-x | X_n^{(N)} = x \Big]f'(x)+N\mathbb{E}\Big[(X_1^{(N)}-x)^2| X_0^{(N)} = x \Big]f''(x)+o(1)\\
=&-\alpha x(1-x)f'(x)\\
& + N\mathbb{E}\Big[(X_1^{(N)}-x)^2\Big|X_0^{(N)} = x, M_1\in[N_x(1-N^a),N_x(1+N^a)]\Big]f''(x)  +o(1)\\
=&-\alpha x(1-x)f'(x)+\frac{N}{\lfloor N_x \rfloor}x(1-x)f''(x)+o(1)\\
=&-\alpha x(1-x)f'(x)+x(1-x)(1-\kappa x)f''(x)+o(1) \underset{N \to \infty}{\longrightarrow}\mathcal{A}f(x),
\end{split}
\]
where the term $o(1)$   depends on $x$ but converges to 0 uniformly in $x$. 
Since all the processes involved are Feller taking values on $[0,1]$ and the convergence of the generators is uniform by Proposition \ref{LLN}, then  the result follows from Lemma 17.25 of \cite{Kallemberg}.
\end{proof}

The rest of this section is dedicated to the study of the Wright-Fisher diffusion with efficiency. We start by proving the following proposition. 
\begin{lemma}\label{propo1}Let $X=(X_t, t\ge 0)$ be the unique strong solution of \eqref{selection},   the following statements holds.
\begin{itemize}
\item [i)] For  $\alpha\ge 0$ and $\kappa<1$,  the boundary points $0$ and $1$ are accesible.

\item[ii)] For  $\kappa=1$ and  $\alpha\geq 0$, the boundary  $1$ is not accesible and the boundary $0$ is accesible.
\end{itemize}
\end{lemma}

\begin{proof}
We start by introducing the Wright-Fisher diffusion with selection  parameter $c \in \R$ as the unique strong solution of  
\begin{equation}\label{WFselection}
\ud \hat Y^{(c)}_t=c\hat Y^{(c)}_t\Big(1- \hat Y^{(c)}_t\Big)\ud t+\sqrt{ \hat Y^{(c)}_t\Big(1-\hat Y^{(c)}_t\Big)}\ud B_t,
\end{equation}
where  $B$ denotes a standard Brownian motion.

 We first deal with the case $\alpha\ge 0$ and $\kappa\in(0,1)$. To do so, we use  a stochastic domination argument.  Let us introduce the following diffusion $\overline{Y}^{(c)}=(\overline{Y}^{(c)}_t, t\ge 0)$ which is obtained as a random time change of $\hat Y^{(c)}$, that is to say,  for the clock
\[
A_t=\int_0^t\frac{\ud s}{1-\kappa \hat Y^{(c)}_s}, \qquad \textrm{for}\quad t\ge 0,
\]
we introduce $\overline{Y}^{(c)}_t= \hat Y^{(c)}_{\theta_t},$ for $ t\ge 0$ where $\theta_t=\inf\{u:A_u>t\}$ is the right-continuous inverse of the clock $A$. Using \eqref{WFselection}, we observe that  $\overline{Y}^{(c)}$ satisfies the following SDE
\[
\ud \overline{Y}^{(c)}_t=c\overline{Y}^{(c)}_t\Big(1-\overline{Y}^{(c)}_t\Big)\Big(1-\kappa \overline{Y}^{(c)}_t\Big)\ud t+\sqrt{ \overline{Y}^{(c)}_t\Big(1-\overline{Y}^{(c)}_t\Big)\Big(1-\kappa \overline{Y}^{(c)}_t\Big)}\ud \beta_t,
\]
where $\beta=(\beta_t, t\ge 0)$ is a standard Brownian motion.
Since, for every $t\ge 0$, we have 
\[
\theta_t=\int_0^t\Big(1-\kappa \overline{Y}^{(c)}_s\Big)\ud s\in [(1-\kappa)t,t],
\] 
and  $ Y^{(c)}$ goes to fixation in finite time a.s. (see for instance equation (3.6) in  \cite{ewen}), we deduce that this is also the case for  $\overline{Y}^{(c)}$. In other words, both boundaries are accessible  for $\overline{Y}^{(c)}$ for every $c\in \R$. 

Finally, since for any fixed $\alpha>0$, we have that a.s. 
\begin{equation*}
\overline{Y}^{(-\alpha)}_t\geq X_t \geq \overline{Y}^{(\alpha/(\kappa-1))}_t, \qquad t\ge 0,
\end{equation*}
and for $\alpha=0$,  $X \equiv \overline{Y}^{(0)}$, we conclude that the boundaries $\{0,1\}$ are also accessible for $X$.

\medskip

For the case $\alpha=0$ and $\kappa=1$, we use the following integral test  which says that the boundary $a$  is accessible if and only if
\begin{equation*}
\int_{x_0}^a M(u)\ud S(u)<\infty
\end{equation*}
where $M$ and $S$ denote the speed measure and  the scale function associated to $X$ (see for instance Chap. 8 in  \cite{EK}). In our case both functions can be computed explicitly. Indeed, the scale function is proportional to the identity and the speed measure satisfies
\begin{equation}\label{speed}
M(x)=\int_{x_0}^x \frac{1}{u(1-u)^2}\ud u=\frac{1}{1-x}-\log(1-x)+\log(x)+c(x_0),
\end{equation}
where $c(x_0)$ is a constant that only depends on $x_0$. We have
\begin{equation*}
\int_{x_0}^1\left( \frac{1}{1-u}-\log(1-u)+\log(u) \right)\ud u=\infty
\end{equation*}
while 
\begin{equation*}
\int_{0}^{x_0}\left( \frac{1}{1-u}-\log(1-u)+\log(u) \right)\ud u<\infty,
\end{equation*}
so 1 is not accessible and 0 is accessible.
The case  $\alpha>0$ and $\kappa=1$, follows directly from a stochastic domination argument by the neutral  Wright-Fisher diffusion with efficiency $\kappa=1$ (and $\alpha=0$) studied above. 
 \end{proof}

\begin{proof}[Proof of Proposition \ref{T}] We first deduce part $(i)$, i.e. we compute the expected time to fixation for the case $\alpha=0$ and $\kappa\in(0,1)$.  To do so,  we use Green's function (see for instance Theorem 3.19 in  \cite{eth}) i.e.
 \[
\mathbb{E}\Big[T_{0,1} | X_0  = x\Big]=\int_0^1 G(x,u)\ud u,
\]
 where the Green's function $G$ is such that
\begin{equation}
G(x,u)=\left\{ \begin{array}{ll}
\frac{2x}{u(1-\kappa u)}& \textrm{ for }  x<u<1 ,\\
 \textrm{  }\\
\frac{2(1-x)}{(1-u)(1-\kappa u)}& \textrm{ for }  0<u<x,
\end{array}
\right .
\label{green}
\end{equation}
implying that  the expected time to fixation satisfies
\[
\begin{split}
\mathbb{E}\Big[T_{0,1} | X_0  = x\Big]&=-2x\Big(\log(x)+\log(1-\kappa)-\log(1-\kappa x)\Big)\\
&\hspace{2cm}-2(1-x)\left(\frac{\log(1-x)}{1-\kappa}-\frac{\log(1-\kappa x)}{1-\kappa}\right).
\end{split}
\]
For part $(ii)$, i.e. when $\alpha>0$ and $\kappa\in(0,1]$,  the Green's function is such that for 
\[
G(x,u)=\left\{ \begin{array}{ll}
\frac{2(S(x)-S(0))}{(S(1)-S(0))S'(u)}\frac{S(1)-S(u)}{u(1-u)(1-\kappa u)}& \textrm{ for }  x<u<1 ,\\
 \textrm{  }\\
\frac{2(S(1)-S(x))}{(S(1)-S(0))S'(u)}\frac{S(u)-S(0)}{u(1-u)(1-\kappa u)}& \textrm{ for }  0<u<x,
\end{array}
\right .
\]
where $S$ is the scale function and satisfies for $x \in [0,1]$
\begin{equation}\label{S}
S(x)=\int_0^x \exp\left\{\frac{2\alpha}{\kappa}\int_\theta^u\frac{\ud v}{\frac{1}{\kappa}-v}\right\}\ud u \ =\left\{ \begin{array}{ll} K\left(1-\left(1-\kappa x\right)^{1-\frac{2\alpha}{\kappa}}\right)&  \textrm{for }  \kappa \ne 2\alpha \nonumber \\
 K\ln \left(\frac{1}{1-\kappa x}\right)& \textrm{for } \kappa= 2\alpha,
\end{array}
\right.
\end{equation}
 where $\theta$ is an arbitrary positive number and $K$ is a constant that depends on $( \kappa, \alpha, \theta)$.
To determine whether the Green's function is integrable on $[0,1]$, it is enough to  study its behavior near the boundaries 0 and 1.  Indeed, for $u$  close to 0, we have
\[
G(x,u)\sim\frac{2(S(1)-S(x))}{(S(1)-S(0))}\frac{1}{(1-u)(1-\kappa u)},
\]
so the function  $u\mapsto G(x,u)$ is always integrable in a neighborhood of 0.
Moreover,  when  $u$  is close to 1, we get
\[
G(x,u)\sim \frac{2(S(x)-S(0))}{(S(1)-S(0))}\frac{1}{u(1-\kappa u)},
\]
so the function  $u\mapsto G(x,u)$ is integrable in a neighborhood of 1 if $\kappa<1$ and is not integrable if $\kappa = 1$, which completes the proof. 
\end{proof}

\begin{proof}[Proof of Proposition \ref{propo1new}] We use Lemma \ref{propo1}. In fact, as both boundaries are accessible,  the probability of fixation of the efficient individuals with selective disadvantage is given by
\begin{eqnarray*}
\p_{y}(\{{\rm Fix.}\})&=& \p(Y_{T_{0,1}}=1 | Y_0 = y) \\
& = & \p(X_{T_{0,1}}=0 | X_0 = 1-y)=\frac{S(1)-S(1-y)}{S(1)-S(0)} \\
&=&\left\{ \begin{array}{ll}
C_{\kappa, \alpha}(1-(1-\kappa x)^{1-\frac{2\alpha}{\kappa}}) & \textrm{if}\quad 2\alpha\ne\kappa,\\
C_{\kappa, \alpha}\ln \left(\frac{1}{1-\kappa x}\right)& \textrm{if} \quad 2\alpha=\kappa,\\
\end{array}
\right.
\end{eqnarray*}
where $C_{\kappa, \alpha}=S(1)^{-1}$  (see for example Lemma 3.14 in \cite{eth}). 

\end{proof}

\section{The Ancestral Selection/Efficiency Graph}
\label{sectASEG}
\subsection{Properties of the ASEG}
\label{ASEG1}
This section is devoted to a more detailed study of the vertex counting process of the ASEG, $Z$.

When $\kappa <1$, it can be shown using standard techniques for birth-death processes, that when $\alpha>0$, the states $\{1, 2, 3, \ldots\}$ are positive recurrent  and the state $\{0\}$ is not accessible. When $\alpha=0$, the states $\{2, 3, \ldots\}$ are positive recurrent, $\{0\}$ is not accessible and  $\{1\}$ is  absorbing (see Proposition 1 in  \cite{GPP} for a detailed proof of the general case).  
In the ASEG, there are coalescence events, which correspond to negative jumps of size 1 of $Z$, and occur at the same rate as in the Kingman coalescent case.
 But there are also two types of events that create new vertices: branching events (which are related to selection, since their rate depends on $\alpha$) and pairwise branching events  (which  are related to efficiency, since their rate depends on $\kappa$). They correspond to positive jumps of size 1 of $Z$  and, therefore, 
$Z$ is not monotone as opposed to the block counting process of the Kingman coalescent which is  monotone decreasing. 
Having these  positive jumps of size one resembles the behavior of the vertex counting process of the ASG. 
When $\alpha=0$, by analogy with the ASG, the pairwise branching events, together with the coloring rule, seem to favor the creation of efficient individuals (as if there was some selection). However, the model is neutral and for every $t>0$,  $\mathbb{E}_{x}[X_t]=x$.
This apparent paradox has the following  nice interpretation. If $X$ starts at $x\in (0,1)$, it will eventually get absorbed in $1$ (with probability $x$) or  $0$ (with probability $1-x$). However, the process will go faster to 0 than to 1, since  the term $(1-\kappa x)$  slows the process down much more when $x$ is close to 1 than when its closer to zero. This explains why, for small times,  it is more likely to sample inefficient individuals but this apparent advantage vanishes for large times since  $X$ will have enough time to reach one of the two absorbing states.

\medskip 

The study of the case $\kappa = 1$ is particularly interesting, since it allows to understand the path behaviour of the Wright-Fisher model with efficiency under {\rm {\bf (M1)}} when $\kappa = 1$, which was not possible to study using the classical theory of diffusions. 
\begin{theorem}
If $\kappa=1$, the block counting process of the ASEG $Z$ is transient if and only if $\alpha\in (1/2, \infty) $. If $\alpha\in [0, 1/2)$ it is positive recurrent and has a unique stationary distribution $\mu$, which is a Sibuya distribution, i.e it is characterized as follows
$$
\sum_{j\ge 1}x^j\mu(j)=1-(1- x)^{1-2\alpha}.
$$
\label{thm2}
\end{theorem}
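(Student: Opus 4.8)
The plan is to treat $Z$, in the case $\kappa=1$, as a continuous-time birth-death chain on $\{1,2,\dots\}$ with birth rate $\lambda_j=\alpha j+\tfrac{j(j-1)}{2}$ from $j$ to $j+1$ and death rate $\mu_j=\tfrac{j(j-1)}{2}$ from $j$ to $j-1$. For $\alpha>0$ the state $1$ reflects upward (since $\mu_1=0<\lambda_1=\alpha$), so the chain is irreducible on $\{1,2,\dots\}$; the degenerate case $\alpha=0$, where $1$ is absorbing, I would handle separately since there $\mu=\delta_1$, consistent with the generating function $1-(1-x)^{1}=x$. Because recurrence and transience of a continuous-time chain are determined by its embedded jump chain, I would first analyze the latter.

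For the dichotomy I would run the classical scale-function computation: set $\sigma_1=1$ and $\sigma_i=\prod_{j=2}^{i}\tfrac{\mu_j}{\lambda_j}$ for $i\ge 2$. Here $\tfrac{\mu_j}{\lambda_j}=\tfrac{j-1}{j-1+2\alpha}$, so $\sigma_i=\prod_{k=1}^{i-1}\tfrac{k}{k+2\alpha}=\tfrac{\Gamma(i)\Gamma(1+2\alpha)}{\Gamma(i+2\alpha)}$, which by standard Gamma asymptotics behaves like $\Gamma(1+2\alpha)\,i^{-2\alpha}$ as $i\to\infty$. Since the probability of escaping from state $2$ to $\infty$ before returning to $1$ equals $\big(\sum_{i\ge 1}\sigma_i\big)^{-1}$, the chain is transient precisely when $\sum_i\sigma_i<\infty$. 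As $\sum_i i^{-2\alpha}$ converges iff $2\alpha>1$, this yields transience iff $\alpha>1/2$ and recurrence iff $\alpha\le 1/2$, the critical value $\alpha=1/2$ being null recurrent (consistent with the strict inequality in the statement). Recurrence of the jump chain also gives non-explosion of $Z$: between successive visits to $1$ only finitely many jumps occur, and the i.i.d.\ return times sum to $+\infty$, so the minimal process is honest.

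For $\alpha\in(0,1/2)$, to obtain positive recurrence together with the stationary law, I would compute the invariant measure directly through its generating function. Writing $G(x)=\sum_{j\ge 1}\mu(j)x^j$ and imposing stationarity $\sum_j\mu(j)\,\mathcal{Q}g(j)=0$ against the test functions $g(j)=x^j$, the identities $\sum_j\mu(j)\,j\,x^j=xG'(x)$ and $\sum_j\mu(j)\,j(j-1)x^j=x^2G''(x)$ turn stationarity into the ordinary differential equation $\tfrac{1}{2}x(x-1)^2G''(x)+\alpha x(x-1)G'(x)=0$, i.e.\ $(1-x)G''(x)=2\alpha G'(x)$ on $(0,1)$. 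Integrating gives $G'(x)=A(1-x)^{-2\alpha}$ and hence $G(x)=B-\tfrac{A}{1-2\alpha}(1-x)^{1-2\alpha}$. The boundary conditions $G(0)=0$ (the measure lives on $j\ge 1$) and $G(1)=1$ (a proper law, using $1-2\alpha>0$ so that $(1-x)^{1-2\alpha}\to 0$) force $B=1$ and $A=1-2\alpha$, so $G(x)=1-(1-x)^{1-2\alpha}$, the announced Sibuya generating function. The existence of this proper invariant probability distribution, combined with recurrence, establishes positive recurrence and the uniqueness of $\mu$.

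As a cross-check I would also record the detailed-balance relation $\pi_j\lambda_j=\pi_{j+1}\mu_{j+1}$, which recursively gives $\pi_n\propto \Gamma(n-1+2\alpha)/n!$ and the same Sibuya law after normalization, making the summability condition $1-2\alpha>0$ transparent. I do not expect any single computation to be the obstacle; rather, the delicate point is the clean separation of the continuous-time and jump-chain notions, in particular justifying non-explosion so that recurrence of the embedded chain genuinely transfers to $Z$ and so that the normalization $G(1)=1$ used to pin down the constants is legitimate, together with correctly treating the degenerate value $\alpha=0$ (absorption at $1$) and the critical value $\alpha=1/2$.
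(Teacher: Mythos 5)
Your proposal is correct, and for the heart of the theorem it takes a genuinely different route from the paper. The paper proves transience via the embedded jump chain $S$, whose transition probabilities are $\tfrac12\pm\tfrac{\alpha}{2k}+O(k^{-2})$, invoking the Lamperti-type criterion of Harris (1952); it proves positive recurrence for $\alpha<1/2$ by a Foster--Lyapunov argument with $f(n)=\ln n$; and, crucially, it identifies $\mu$ not by any direct computation but through moment duality (Lemma \ref{Lemduality}): since $\mathbb{E}_x[X_\infty^n]=\sum_{j\ge1}x^j\mu(j)$ for all $n$, the limit $X_\infty$ must be Bernoulli, and the generating function of $\mu$ equals the fixation probability $\p_{1-x}(\{\mathrm{Fix.}\})=1-(1-x)^{1-2\alpha}$ from Proposition \ref{propo1new}. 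You instead exploit that $Z$ is a birth--death chain, hence reversible: the resistance sum $\sum_i\sigma_i$ with $\sigma_i\sim\Gamma(1+2\alpha)\,i^{-2\alpha}$ gives the transience dichotomy by a purely computational route (equivalent to, but not needing, the Harris/Lamperti citation); detailed balance gives the invariant measure $\pi_n\propto\Gamma(n-1+2\alpha)/n!$ explicitly, whose summability exactly when $\alpha<1/2$ yields positive recurrence with no Lyapunov function; and the Sibuya generating function drops out of the elementary ODE $(1-x)G''=2\alpha G'$ (your stationarity computation and boundary conditions check out, and the interchange of sums needed to test against $g(j)=x^j$ is harmless for $x\in(0,1)$ since $\sum_j\mu(j)j^2x^j<\infty$). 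Your bookkeeping of the side issues the paper glosses over is also sound: non-explosion via recurrence of the jump chain, the degenerate case $\alpha=0$ where $\mu=\delta_1$ matches $1-(1-x)$, and uniqueness from irreducibility plus recurrence. What each approach buys: yours is self-contained and elementary, avoids relying on Proposition \ref{propo1new} at the boundary value $\kappa=1$ (where that proposition is only stated for $\kappa\in(0,1)$ and $1$ is inaccessible), and as a bonus settles null recurrence at the critical value $\alpha=1/2$ (recurrence from the divergent resistance sum together with the non-summable invariant measure $\pi_n\asymp 1/n$), which the paper leaves as a conjecture --- though not the accompanying conjecture that $X\to0$ a.s. there. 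The paper's duality route, by contrast, explains conceptually \emph{why} $\mu$ is Sibuya --- its generating function is the fixation probability of the dual diffusion --- and is the mechanism that powers Corollary \ref{corothm2}, so the two arguments are complementary rather than redundant.
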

This theorem, together with the moment duality, allows us to describe the limiting behavior of $X$ and the distribution of the random variable $X_\infty:=\lim_{t\to\infty} X_t$ (in distribution),  when $\kappa=1$ and  $\alpha\in [0, 1/2)$.
\begin{corollary}\label{thmthm} The Wright-Fisher diffusion with efficiency  $X$ converges almost surely to zero if $\kappa=1$ and $\alpha> 1/2 $.  Moreover if $\kappa=1$ and  $\alpha\in [0, 1/2)$, then $X_\infty$ follows a Binomial distribution with parameter $1-(1- x)^{1-2\alpha}$.
\label{corothm2}
\end{corollary}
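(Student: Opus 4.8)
The plan is to read off the large-time behaviour of $X$ from that of the dual process $Z$ through the moment duality of Lemma \ref{Lemduality},
\[
\mathbb{E}_x[X_t^n]=\mathbf{E}_n[x^{Z_t}], \qquad n\ge 1,\ x\in[0,1],\ t>0,
\]
combined with the recurrence/transience dichotomy of Theorem \ref{thm2}. A preliminary remark makes the almost sure statements available: when $\kappa=1$ the drift of \eqref{selection} equals $-\alpha X_t(1-X_t)\le 0$, so $X=(X_t,t\ge0)$ is a bounded nonnegative supermartingale (a martingale when $\alpha=0$), and its stochastic integral part is a true martingale since the coefficients are bounded on $[0,1]$. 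By the martingale convergence theorem $X_t\to X_\infty$ almost surely for some $X_\infty\in[0,1]$, and by boundedness $\mathbb{E}_x[X_t^n]\to\mathbb{E}[X_\infty^n]$ for every $n\ge 1$. It therefore suffices to compute $\lim_{t\to\infty}\mathbf{E}_n[x^{Z_t}]$ and identify the law of $X_\infty$ from its moments.

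First I would treat the transient regime $\alpha>1/2$. By Theorem \ref{thm2} the chain $Z$ is transient on $\{1,2,\dots\}$ with $0$ inaccessible, so it leaves every finite set eventually and $Z_t\to\infty$ almost surely. Hence for $x\in[0,1)$ we have $x^{Z_t}\to 0$ a.s., and since $0\le x^{Z_t}\le 1$ dominated convergence gives $\mathbf{E}_n[x^{Z_t}]\to 0$. Taking $n=1$ in the duality yields $\mathbb{E}_x[X_t]\to 0$, which together with $\mathbb{E}_x[X_t]\to\mathbb{E}[X_\infty]$ and $X_\infty\ge 0$ forces $\mathbb{E}[X_\infty]=0$, i.e. $X_\infty=0$ a.s. (the boundary initial conditions $x\in\{0,1\}$ being trivial).

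For the recurrent regime $\alpha\in[0,1/2)$, Theorem \ref{thm2} gives that $Z$ is positive recurrent and converges in distribution, from any starting state $n$, to the stationary Sibuya law $\mu$, so $\mathbf{P}_n(Z_t=j)\to\mu(j)$ for each $j$. For $x\in(0,1)$ the series is dominated uniformly in $t$ by the summable sequence $x^j$, and dominated convergence together with the generating-function characterisation of $\mu$ gives
\[
\mathbf{E}_n[x^{Z_t}]=\sum_{j\ge1}x^j\,\mathbf{P}_n(Z_t=j)\ \xrightarrow[t\to\infty]{}\ \sum_{j\ge1}x^j\mu(j)=1-(1-x)^{1-2\alpha}=:p.
\]
Crucially this limit does not depend on $n$, so the duality forces $\mathbb{E}[X_\infty^n]=p$ for every $n\ge1$. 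Consequently $\mathbb{E}[X_\infty^n(1-X_\infty)]=\mathbb{E}[X_\infty^n]-\mathbb{E}[X_\infty^{n+1}]=p-p=0$; since $X_\infty\in[0,1]$ makes the integrand nonnegative, the case $n=1$ gives $X_\infty(1-X_\infty)=0$ a.s., i.e. $X_\infty\in\{0,1\}$ a.s. Finally $\mathbb{P}(X_\infty=1)=\mathbb{E}[X_\infty]=p$, so $X_\infty$ is Bernoulli, that is Binomial with parameter $p=1-(1-x)^{1-2\alpha}$.

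The genuinely substantial input, the recurrence/transience dichotomy and the explicit stationary law, is already provided by Theorem \ref{thm2}, so the remaining work is light. The only points requiring care are the justification that a transient $Z$ satisfies $Z_t\to\infty$ (which holds regardless of a possible explosion, since $x^{\infty}=0$ for $x<1$ and the argument only uses $x^{Z_t}\to0$), the interchange of limit and summation in the generating function (handled by the uniform dominating series $x^j$), and the passage from equality of all moments to the Bernoulli conclusion. I expect the mildest obstacle to be purely bookkeeping: being explicit that the almost sure convergence $X_t\to X_\infty$ comes from the supermartingale property rather than only from the distributional limit implicit in the definition of $X_\infty$.
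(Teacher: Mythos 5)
Your proof is correct and takes essentially the same route as the paper: the transient case via duality forcing $\mathbb{E}_x[X_t]\to 0$, and the recurrent case via the $n$-independence of $\lim_{t\to\infty}\mathbf{E}_n[x^{Z_t}]$ together with the Sibuya generating function, which is exactly the computation the paper carries out inside the proof of Theorem \ref{thm2} and then cites in its two-line proof of the corollary. Your additions are only careful bookkeeping the paper leaves implicit — the bounded supermartingale argument upgrading moment convergence to almost sure convergence of $X_t$, and the explicit step $\mathbb{E}[X_\infty^n(1-X_\infty)]=0$ identifying the Bernoulli law — both of which are sound.
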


When $\kappa = 1$ and $\alpha = 1/2$, the path behavior of $X$ is not so easy to study. We conjecture that in this case the  block counting process of the ASEG is null recurrent and  $X$ goes to zero almost surely.

\subsection{Proofs}
This section is devoted to the proofs of the results presented in Sections \ref{introASEG} and \ref{ASEG1}.
\begin{proof}[Proof of Lemma \ref{Lemduality}]
Let us  denote by $\mathcal{Q}$  the generator of $Z$ which satisfies, for any 
$f$ bounded function of $\N$, that
\[
\mathcal{Q}f(n)  = \left(\alpha n +\frac{n(n-1)}{2} \kappa\right) \Big(f( n+1) - f(n)\Big) +  \frac{n(n-1)}{2}  \Big(f(n-1) - f(n)\Big).
\]
Recall that $\mathcal{A}$ denotes the infinitesimal generator of the Wright-Fisher diffusion with efficiency which satisfies \eqref{genX} for any function in $C^2([0,1])$.   We consider a function $h$ which is defined on $[0,1] \times \N$ and such that $h(x,n) = x^n$. 

Since  $h(x,n)$ and $\E[x^{Z_t}]$ are polynomials on $x$ and $h(x,n)$ and $\E[X_t^{n}]$ are bounded functions of $n$, we deduce from Proposition 1.2 of \cite{JK}  that our claim follows if we show that  for all $x \in [0,1]$ and $n \in \N$, the following identity holds 
$$\mathcal{A}h(x,n) = \mathcal{Q}h(x,n).$$
We observe that  in the left-hand side of the above identity,  $\mathcal{A}$ acts on $h$  (seen as a function of $x$) and in the right-hand side $\mathcal{Q}$ acts on $h$ (seen as a function of $n$). 

Hence from the definitions of $\mathcal{A}$ and $\mathcal{Q}$, it is clear that  for all $x \in [0,1]$ and $n \in \N$, we have
\[ 
\begin{split}
\mathcal{A}h(x,n) &= -\alpha x(1-x) \frac{\partial h(x,n)}{\partial x} + x(1-x)(1-\kappa x) \frac{\partial^2 h(x,n)}{\partial x^2} \\
& = - \alpha x(1-x) n x^{n-1} + x(1-x) (1-\kappa x) \frac{n(n-1)}{2} x^{n-2} \\
& =  (\alpha n +\frac{n(n-1)}{2} \kappa) (x^{n+1} - x^n) + \frac{n(n-1)}{2} (x^{n-1} - x^{n})\\
& = (\alpha n +\frac{n(n-1)}{2} \kappa) (h(x, n+1) - h(x, n)) +  \frac{n(n-1)}{2}  (h(x, n-1) - h(x, n)) \\ 
&= \mathcal{Q}h(x,n),
 \end{split}
\]
which  completes the proof.
\end{proof}

\begin{proof}[Proof of Theorem \ref{thm2}]
We start by proving that $Z$ is transient if and only if $\alpha>1/2$ and  $Z$ positive recurrent if $\alpha\in[0,1/2)$.
To do so, we study its jump chain  denoted by $S=(S_n, n\ge0)$ (see for instance  3.4.1 in  \cite{Norris}).  Observe that  $S$ is a birth-death Markov chain with transition probabilities given by 
\[
\Pi_{k,j}=\mathbb{P}(S_1=j|S_0=k)=\left\{ \begin{array}{ll}
\frac{1}{2}+\frac{\alpha}{2k}+O(k^{-2})& \textrm{ for }  j=k+1 ,\\
 \textrm{  }\\
\frac{1}{2}-\frac{\alpha}{2k}+O(k^{-2})& \textrm{ for }  j=k-1.
\end{array}
\right .
\]
From Theorem 3 in \cite{Ha} (see also \cite{Lamperti}) we know that $S$ is transient if and only if $\alpha>1/2$ which implies the first part of our claim.

For the second part, we use  the Foster-Lyapunov criteria (see for instance Proposition 1.3 in \cite{Hairer}) with the Lyapunov function $f(n)=\ln(n)$. Recall that $\mathcal{Q}$  is  the generator of $Z$. Using Taylor's expansion
\[
\begin{split}
\mathcal{Q}\ln(n)&=\alpha n \ln\left(1+\frac{1}{n}\right)+\binom{n}{2}\left(\ln\left(1-\frac{1}{n}\right)+\ln\left(1+\frac{1}{n}\right)\right)\\
&=\alpha\left(1-\frac{1}{2n}\right)-1/2\left(1+\frac{1}{n}\right)+O(n^{-2}).
\end{split}
\]
 That is to say
\[
\mathcal{Q}\ln(n)
\le -\epsilon,
\]
 for all but finitely many values of $n$, and  any $\epsilon\in (0,1/2-\alpha)$. According to Foster-Lyapunov criteria $Z$ is positive recurrent and there exists a unique invariant distribution here denoted by $\mu$. 
The moment duality property  (Lemma \ref{Lemduality}) implies that 
\begin{equation*}
\mathbb{E}[X_\infty^n |X_0 = x]=\sum_{j\ge 1}x^j\mu(j), \qquad \textrm{for all} \quad n\ge0.
\end{equation*}
Therefore $X_\infty$ must be a Bernoulli distribution and we have
 \[
\sum_{j\ge 1}x^j\mu(j)=\mathbb{P}_x(X_\infty=1)=\p_{1-x}(\{{\rm Fix.}\})=1-(1-\kappa x)^{1-\frac{2\alpha}{\kappa}}.
\]
The proof of our Theorem is now complete.
\end{proof}

\begin{proof}[Proof of Corollary \ref{corothm2}]
For the case $\alpha >1/2$, we observe  from the moment duality property  (Lemma \ref{Lemduality}) that $Z$ is transient implies that  $X$ goes to $0$ a.s. The case $\alpha<1/2$ has already been proved in the proof of  Theorem \ref{thm2}.
\end{proof}

\section{Scaling limit of model {\rm {\bf (M2)}}}
\label{S4}
Let us define the random variable $\underline{M}_n(N)=\inf\{i\in\mathbb{N}:C_{(n,i)}>N-1\}$, which counts the number of individuals created before efficient individuals are the only ones that can be produced. The following proposition  provides the limiting distribution of the amount of resource that  are still available after $\underline{M}_n(N)$ individuals have been produced.
\begin{proposition}\label{O}
Assume that   $\kappa=a/b\in(0,1)$ for some relative primes $a,b\in\mathbb{N}$. Let $U$ be a  uniform random variable defined on $\mathbb{D}_b:=\{j/b:j=0,1,...,b-1\}$. Then for all $n\ge 0$ and $x\in(0,1)$ , given $X^{(N)}_{n-1}=x$,
$$
\lim_{N\rightarrow\infty} N-C_{(n,\underline{M}_n(N))}\stackrel{d}{=}U,
$$
where ``$\stackrel{d}{=}$'' means identity in distribution or law.
\end{proposition}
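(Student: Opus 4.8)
The plan is to recast the cost sequence as an integer-valued random walk and to extract the law of the leftover resource from renewal theory. Writing $\kappa=a/b$ with $\gcd(a,b)=1$ and rescaling by $b$, set $\widetilde S_i:=b\,C_{(n,i)}=bi-a\sum_{j=1}^i\mathbf 1_{\{{\tt t}(n,j)=0\}}$, which is integer valued. Conditionally on $X^{(N)}_{n-1}=x$, in the neutral case the types of the successive individuals are i.i.d.\ with probability $x$ of being efficient, so $\widetilde S=(\widetilde S_i)_{i\ge 0}$ is a random walk started at $0$ with i.i.d.\ increments $\eta_j\in\{b-a,b\}$, where $\mathbb P(\eta=b-a)=x$ (efficient birth) and $\mathbb P(\eta=b)=1-x$ (inefficient birth); in the selective case the efficient-birth probability is $x+O(1/N)$, a perturbation that vanishes in the limit. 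Since $N-C_{(n,i)}=(bN-\widetilde S_i)/b$ takes values in $\mathbb D_b$, and $\underline M_n(N)=\inf\{i:\widetilde S_i>b(N-1)\}$, the quantity of interest is $bN-\widetilde S_{\underline M_n(N)}$, that is $b$ minus the overshoot of $\widetilde S$ across the level $L_N:=b(N-1)$, read modulo $b$.

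The first ingredient is aperiodicity: since $\gcd(b-a,b)=\gcd(a,b)=1$, the increment law has span $1$. As $\widetilde S$ is strictly increasing it visits each integer at most once, so the renewal mass $u(v):=\sum_{i\ge 0}\mathbb P(\widetilde S_i=v)$ equals $\mathbb P(\exists\,i:\widetilde S_i=v)$, and the renewal theorem yields, with $\mu:=\mathbb E[\eta]=b-ax$,
\begin{equation*}
u(v)\xrightarrow[v\to\infty]{}\frac{1}{\mu}.
\end{equation*}

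Next I would read off the law of $\widetilde S_{\underline M_n(N)}$ by decomposing on the last site visited at or below $L_N$ and on the straddling increment. For $m>L_N$,
\begin{equation*}
\mathbb P\bigl(\widetilde S_{\underline M_n(N)}=m\bigr)=x\,u\bigl(m-(b-a)\bigr)\,\mathbf 1_{\{m-(b-a)\le L_N\}}+(1-x)\,u(m-b)\,\mathbf 1_{\{m-b\le L_N\}}.
\end{equation*}
Setting $m=L_N+r$ with $r\in\{1,\dots,b\}$ and letting $N\to\infty$, each argument of $u$ diverges, so $u\to 1/\mu$ and the probabilities converge to an explicit limit indexed by $r$; translating through $N-C=(b-r)/b$ then gives the limiting distribution of $N-C_{(n,\underline M_n(N))}$ on $\mathbb D_b$, which is the content of the proposition.

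The main obstacle is exactly this identification, because $\underline M_n(N)$ is a stopping time and the overshoot is therefore size-biased toward the larger increment (the waiting-time/inspection paradox). In particular one cannot simply invoke that the residue chain $R_i:=\widetilde S_i\bmod b$---which jumps by $-a$ with probability $x$ and holds otherwise, hence is irreducible on $\mathbb Z/b\mathbb Z$ by $\gcd(a,b)=1$, aperiodic, and uniform in equilibrium---has equilibrated at the random index $\underline M_n(N)$. The delicate step is thus to determine the precise limiting law of the overshoot and to check whether the size-biasing redistributes uniformly over $\mathbb D_b$: the renewal computation above points to a residual-life weighting proportional to $\mathbb P(\eta\ge r)$, so matching it to the claimed uniform law (and verifying robustness to the $O(1/N)$ selective perturbation and to the conditioning on $X^{(N)}_{n-1}=x$) is the step I would scrutinise most carefully.
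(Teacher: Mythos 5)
Your route is genuinely different from the paper's: the paper works with the residue chain $O^{(n)}_i=\lceil C_{(n,i)}\rceil-C_{(n,i)}$ on $\mathbb{D}_b$, observes that it is doubly stochastic (shift by $a/b$ modulo $1$ with probability $x$, hold otherwise), hence uniform in equilibrium, and then evaluates it at the first-passage index $\underline{M}_n(N)$, asserting convergence to $U$ merely because $\underline{M}_n(N)\ge N$. Your renewal decomposition is correct as far as it goes, and you should finish it: with $L_N=b(N-1)$ and $m=L_N+r$, your formula yields $\mathbb{P}(R=r)\to\mathbb{P}(\eta\ge r)/\mu$, i.e.\ $1/(b-ax)$ for $r=1,\dots,b-a$ and $(1-x)/(b-ax)$ for $r=b-a+1,\dots,b$, so that $N-C_{(n,\underline{M}_n(N))}=(b-R)/b$ converges in law to
\begin{equation*}
\mathbb{P}\Bigl(N-C_{(n,\underline{M}_n(N))}=\tfrac{j}{b}\Bigr)\longrightarrow
\begin{cases}
\dfrac{1-x}{b-ax}, & j=0,\dots,a-1,\\[1ex]
\dfrac{1}{b-ax}, & j=a,\dots,b-1,
\end{cases}
\end{equation*}
which is uniform only in the degenerate case $x=0$ (where the lattice span is $b$ and the span-$1$ renewal theorem does not even apply). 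So the ``matching'' you defer to in your last paragraph cannot succeed: your computation, completed, shows the proposition as stated fails for $x\in(0,1)$, and the step of the paper's proof you flagged---equilibration of the residue chain at the random index $\underline{M}_n(N)$---is precisely where the paper errs. This is the inspection paradox: the overshoot is size-biased toward the large increment, and the bias does not wash out as $N\to\infty$.

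A sanity check confirms your weights. Take $\kappa=1/2$ ($a=1$, $b=2$) and $x$ close to $1$: the walk then moves essentially in steps of $1/2$, hits level $N-1$ and lands at $N-1/2$, so the leftover resource is $1/2$ with probability close to $1$, not uniform on $\{0,1/2\}$; your residual-life law gives exactly $\bigl(1/(2-x),(1-x)/(2-x)\bigr)$. (An exact enumeration at $x=1/2$ with level $N-1=1$ already gives masses $(5/8,3/8)$ on $(1/2,0)$, drifting to the renewal limit $(2/3,1/3)$ rather than $(1/2,1/2)$.) The only genuine gap in your proposal is that you stop short of drawing this conclusion; everything before it is sound, including the aperiodicity via $\gcd(b-a,b)=1$, the identification of $u$ with the hitting probabilities of the strictly increasing walk, and the remark that the selective perturbation is $O(1/N)$ and harmless. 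You have in fact found an error in the paper, with a downstream consequence: the asymptotic probability that one extra efficient individual fits is $\mathbb{P}(N-C\ge 1-\kappa)=a/(b-ax)=\kappa/(1-\kappa x)$ rather than $\kappa$, which in case $(i)$ of Theorem \ref{scaling4} would replace the drift $\kappa(1-\kappa X_t)X_t(1-X_t)$ by $\kappa X_t(1-X_t)$, with analogous corrections in cases $(ii)$ and $(iii)$.
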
 
\begin{proof}
For $n\ge 0$ and $i\ge 0$, define the random variable $O^{(n)}_i:= \lceil C_{(n,i)} \rceil-C_{(n,i)}$, which measures the distance between the amount of resource consumed by the first $i$ individuals created in generation $n$ from its closest integer above. Assuming that the frequency of efficient individuals in generation $n-1$ is $x$, each new individual is efficient with probability $x$, in which case $O^{(n)}_{i+1}$ moves $a$ units from  $O^{(n)}_{i}$ on $\mathbb{D}_b$,  and inefficient otherwise, that is to say $O^{(n)}_{i+1}$ moves $b$ units from  $O^{(n)}_{i}$ on $\mathbb{D}_b$, and thus it does not move at all. In other words $(O^{(n)}_i, i\ge 0)$ is a Markov chain with state space $\mathbb{D}_b$ and transition probabilities given by
\begin{equation*}
P_{j/b,r/b}=\left\{ \begin{array}{ll}
x& \textrm{ if } r=j-a \textrm{ or }r=j-a+b ,\\
1-x, & \textrm{ if } r=j,\\
0, & \textrm{ otherwise. } 
\end{array}
\right .
\end{equation*}
where $j,r\in \{0,1,...,b-1\}$ .
Since the state space is finite, the Markov chain $(O^{(n)}_i, i\ge 0)$ has a  stationary distribution denoted by $U$ and since the transition probabilities from each state are the same, $U$ is the uniform distribution on $\mathbb{D}_b$. In other words, 
 $\lim_{i\rightarrow\infty} O_i\stackrel{d}{=}U$. Since $O_{\underline{M}_n(N)}=N-C_{(n,\underline{M}_n(N))}$ and $\underline{M}_n(N)\ge N$, we conclude that $\lim_{N\rightarrow\infty} O_{\underline{M}_n(N)} \stackrel{d}{=}U$ and the proof is complete.
\end{proof}
\begin{proof}[Proof of Theorem \ref{scaling4}]  We start by proving $(i)$, i.e. we consider the case where $\kappa$ is a rational number smaller than $1/2$. 
When it is no longer possible to produce inefficient individuals, there are two possible scenarios: either the amount of remaining resources is less than $1-\kappa$ and  it is not possible to produce more individuals of any type or  the amount of resource left is  in  $[1-\kappa,1),$ and  it is still possible to produce one more efficient individual.  By Proposition \ref{O}, the probability of the second case is asymptotically $\kappa$, as $N$ goes to infinity. Conditioning on the event that the amount of resource left is in $[1-\kappa,1)$, and given that the frequency of efficient individuals in the previous generation is $x$,  a new efficient individual  will be produced using the remaining resources with  probability $x$ i.e.  the number of new individuals produced follows a Bernoulli distribution with parameter $x$. Let us denote by $B$ such Bernoulli random variable with parameter $x$ and observe that, as $N$ increases, we have 
\[
\begin{split}
\mathbb{E}[X^{(N)}_1-x]&=\mathbb{E}\Big[X^{(N)}_1-x\Big| N-C_{(1,\underline{M}_1(N))}\in[1-\kappa,1)\Big]\\
&\hspace{5cm}\times\mathbb{P}\Big(N-C_{(1,\underline{M}_1(N))}\in[1-\kappa,1)\Big)\\
&+\mathbb{E}\Big[X^{(N)}_1-x\Big| N-C_{(1,\underline{M}_1(N))}\in[0,1-\kappa)\Big]\\
&\hspace{5cm}\times\mathbb{P}\Big(N-C_{(1,\underline{M}_1(N))}\in[0,1-\kappa)\Big)\\
&=\mathbb{E}\left[\frac{\sum_{i=1}^{\underline{M}_1(N)+B}(\mathbf{1}_{\{t(1,i)=0\}}-x)}{\underline{M}_1(N)+B}\right]\kappa\\
&\hspace{4cm}+\mathbb{E}\left[\frac{\sum_{i=1}^{\underline{M}_1(N)}(\mathbf{1}_{\{t(1,i)=0\}}-x)}{\underline{M}_1(N)}\right](1-\kappa).
\end{split}
\]
Note that any individual created after $\underline{M}_1(N)$ individuals  must be of the efficient type and
\[
\mathbb{E}\Big[\mathbf{1}_{\{t(1,i)=0\}}-x\Big]=0, \qquad\textrm{for every } \quad i\leq \underline{M}_1(N),
\] 
so the second term in the right-hand side is equal to zero and the first term becomes
\begin{eqnarray}
\mathbb{E}[X^{(N)}_1-x]
&=&\mathbb{E}\left[\frac{B(1-x)}{\underline{M}_1(N)+B}\right]\kappa \nonumber\\
&=&\frac{\kappa x (1-x)}{N_x}\mathbb{E}\left[\frac{B}{x}\frac{N_x}{\underline{M}_1(N)+B}\right]\nonumber\\
&\sim& \frac{\kappa x (1-x)}{N_x} = \frac{\kappa x (1-x)(1-\kappa x)}{N},
\label{equivY1}
\end{eqnarray}
where the equivalence comes from the fact that  $M_1 = \underline{M}_1(N)+B$ and  from Proposition \ref{LLN},  \[\lim_{N\rightarrow \infty}\E\left[\frac{B}{x} \frac{N_x}{\underline{M}_1(N)+B}\right]=1.\]
Next, as in the proof of Theorem \ref{thmscaling},  we observe that the drift and diffusive terms of the  SDE \eqref{diff4} are Lipschitz and H\"older continuous, respectively, which provides that  \eqref{diff4} has a unique strong solution. We denote  by $\mathcal{A}$ its infinitesimal generator, whose domain contains $C^2([0,1])$.
Similar arguments to those used   in the proof of Theorem \ref{thmscaling}, together with \eqref{equivY1}, lead to the fact that, if $f\in C^2([0,1])$ and $x \in [0,1]$,  the discrete generator of $X^{(N)}_{\lfloor Nt \rfloor}$ satisfies
\[
\begin{split}
A^Nf(x)&:=N\mathbb{E}\Big[f(X_1^{(N)})-f(x)\Big]\\
&=N\mathbb{E}\Big[X_1^{(N)}-x\Big]f'(x)+N\mathbb{E}\Big[(X_1^{(N)}-x)^2\Big]f''(x)+o(1)\\
&=\kappa x(1-x)(1-\kappa x)f'(x)+x(1-x)(1-\kappa x)f''(x)+o(1)\\
&\to \mathcal{A} f(x), 
\end{split}
\]
where, again, the term $o(1)$ depends on $x$ but converges to $0$ uniformly on $x$. 
Again, since all the processes involved are Feller taking values on $[0,1]$ and the convergence of the generators is uniform,   the result follows from Lemma 17.25 of \cite{Kallemberg}.

\medskip 

We now prove $(ii)$, i.e. we consider the case where $1-\kappa = 1/b$ for some $b \in \N$. 
 Let $\{G_i, 1\le i\le b-1\}$  be a sequence of independent random variables, such that $\mathbb{P}(G_i=j)=x^j(1-x)$ for all $j\in \{0,1,...,i-1\}$ and $\mathbb{P}(G_i=i)=x^i$. In other words,  $G_i$ is a geometric random variable truncated at $i$ which  is interpreted as the number of efficient individuals produced when the amount of  resource left is $i/b$. We have $\mathbb{E}[G_r]=x(1-x^r)(1-x)^{-1}$. Using Proposition \ref{O} and similar arguments as in the proof of \eqref{equivY1} allow us to deduce
 
\[
\begin{split}
\mathbb{E}\Big[X^{(N)}_1 -x\Big]&=\sum_{r=0}^{b-1}\mathbb{E}\Big[X^{(N)}_1-x\Big| N-C_{(1,\underline{M}_1(N))}\in [\frac{r}{b},\frac{r+1}{b})\Big]\\
&\hspace{3cm}\times\mathbb{P}\Big(N-C_{(1,\underline{M}_1(N))}\in  [\frac{r}{b},\frac{r+1}{b})\Big)\\
&=\sum_{r=1}^{b-1} \mathbb{E}\left[\frac{\sum_{i=1}^{\underline{M}_1(N)}(\mathbf{1}_{\{t(1,i)=0\}}-x)+G_r(1-x)}{\underline{M}_1(N)+G_r}\right](1-\kappa)\\
&=\frac{(1-\kappa) (1-x)}{N_x}\sum_{r=1}^{b-1} \mathbb{E}\left[G_r\frac{N_x}{\underline{M}_1(N)+G_r}\right]\\
&\sim\frac{(1-\kappa)(1-\kappa x) (1-x)}{N}\sum_{r=1}^{b-1}x\frac{1-x^r}{1-x}\\
&=\frac{(1-\kappa)(1-\kappa x) x}{N}\sum_{r=1}^{b-1}(1-x^r).
\end{split}
\]
To prove our result we proceed similarly as in part $(i)$. The SDE \eqref{diff5} has a unique strong solution and we denote its infinitesimal generator by $\mathcal{A}$. If $f\in C^2([0,1])$ and $x \in [0,1]$, the discrete generator of $X^{(N)}_{\lfloor Nt \rfloor}$ satisfies
\[
\begin{split}
A^Nf(x)&=(1-\kappa) x(1-x)(1-\kappa x)\sum_{j=1}^{b-1}(1-x^j)f'(x)\\
&\hspace{5cm}+x(1-x)(1-\kappa x)f''(x)+o(1)\\
&\to \mathcal{A} f(x),
\end{split}
\]
and the result follows, as in part $(ii)$.

\medskip 

Finally, we prove part $(iii)$, i.e. the case where $1-\kappa=a/b\in(0,1/2)$ for some relative primes $a,b\in\mathbb{N}$. We recall that  $m=\lfloor(1-\kappa)^{-1}\rfloor$, $c_m=1-ma/b$ and $c_i=a/b$ for all $i=1,2,...,m-1$. Let  $\{G_i, 1\le i \le m\}$  be a sequence of independent random variables defined as in part $(ii)$. The constants $\{c_i, 1\le i\le m\}$  and the random variables $\{G_i, 1\le i \le m\}$ have the following interpretation:  once it is not longer  possible to produce more inefficient individuals, if the amount of remaining resource lies in $[i a/b, (i+1)a/b)$  the number of new individuals produced will be $G_i$ and the probability of such event is asymptotically $c_i$. Similarly,  if the amount of remaining resource is in $[ma/b, 1]$  the number of new individuals produced will be  $G_m$ and the probability of such event is asymptotically $c_m$ (see Figure \ref{ab}).

\begin{figure}[h]
\begin{center}
\includegraphics[width = 1.0\textwidth]{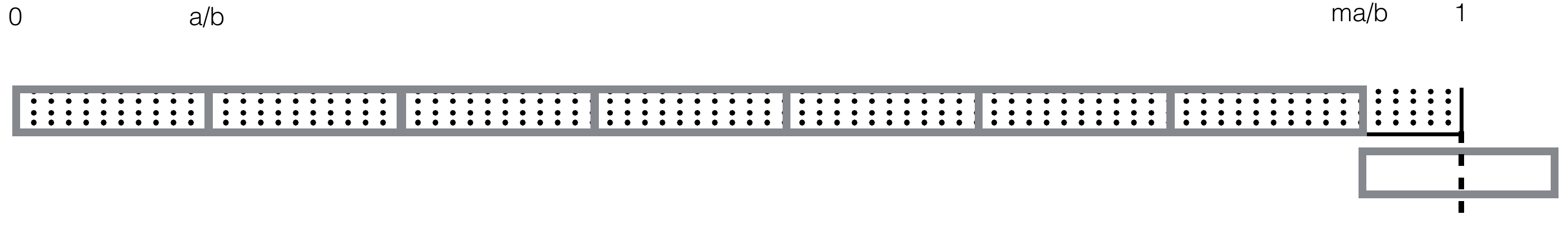}
\caption{\small  
In this example $1-\kappa=a/b=11/83$, the number of full rectangles that can be fit in the line of length 1 is $m=7$ and  $c_m=1-m(1-\kappa)=(83-77)/83=6/83.$}
\end{center}
\label{ab}
\end{figure}
Proceeding as in parts $(i)$ and $(ii)$ we have
\[
\begin{split}
\mathbb{E}\Big[X^{(N)}_1-x\Big]&=\sum_{r=0}^{m}\mathbb{E}\Big[X^{(N)}_1-x\Big| N-C_{(1,\underline{M}_1(N))}\in [ra/b,(r+1)a/b)\Big]\\
&\hspace{3.5cm}\times\mathbb{P}\Big(N-C_{(1,\underline{M}_1(N))}\in [ra/b,(r+1)a/b)\Big)\\
&=\sum_{r=1}^{m} \mathbb{E}\left[\frac{\sum_{i=1}^{\underline{M}_1(N)}(\mathbf{1}_{\{t(1,i)=0\}}-x)+G_r(1-x)}{\underline{M}_1(N)+G_r}\right]c_r\\
&=\frac{1-x}{N_x}\sum_{r=1}^{m}c_r  \mathbb{E}\left[G_r\frac{N_x}{\underline{M}_1(N)+G_r}\right]\\
&\sim \frac{(1-\kappa x) (1-x)}{N}\sum_{r=1}^{m}c_rx\frac{1-x^r}{1-x}\\
&=\frac{(1-\kappa x) x}{N}\sum_{r=1}^{m}c_r(1-x^r).
\end{split}
\]
To complete the proof,  we proceed similarly as in parts $(i)$ and $(ii)$. Again, the SDE \eqref{diff56}  has a unique strong solution. We denote by $\mathcal{A}$ its infinitesimal generator, whose domain contains $C^2([0,1])$. If $f\in C^2([0,1])$ and $x \in [0,1]$, the discrete generator of $X^{(N)}$ satisfies
\[
\begin{split}
A^Nf(x)&= x(1-x)(1-\kappa x)\sum_{j=1}^{b-1}c_j(1-x^j)f'(x)\\
&\hspace{4cm}+x(1-x)(1-\kappa x)f''(x)+o(1)\\
&\to  \mathcal{A} f(x),
\end{split}
\]
and again, the conclusion follows.
\end{proof}

Before proving Proposition \ref{propoM2}, we start by proving two other results on the path behavior of the Wright-Fisher diffusion with efficiency under  {\rm {\bf (M2)}}.
\begin{lemma}
Let $X=(X_t, t\ge 0)$ be the unique strong solution of \eqref{diff4} parametrised by a rational number $\kappa \in (0,1)$ and $\alpha \ge 0$.
The boundary points $0$ and $1$ are accesible.
\end{lemma}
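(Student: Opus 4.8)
The plan is to mirror the stochastic domination argument used for the diffusion \eqref{selection} in part $(i)$ of the proof of Lemma \ref{propo1}. The key observation is that the SDE \eqref{diff4} has exactly the same diffusion coefficient $\sqrt{X_t(1-X_t)(1-\kappa X_t)}$ as the time-changed Wright-Fisher diffusion $\overline{Y}^{(c)}$ introduced there, which satisfies
\begin{equation*}
\ud \overline{Y}^{(c)}_t=c\,\overline{Y}^{(c)}_t\big(1-\overline{Y}^{(c)}_t\big)\big(1-\kappa \overline{Y}^{(c)}_t\big)\ud t+\sqrt{ \overline{Y}^{(c)}_t\big(1-\overline{Y}^{(c)}_t\big)\big(1-\kappa \overline{Y}^{(c)}_t\big)}\ud \beta_t,
\end{equation*}
and for which both boundaries $0$ and $1$ are accessible, with fixation occurring in finite time a.s., for every $c\in\R$. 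I would therefore bound the drift of \eqref{diff4} from above and below by drifts of the form $c\,x(1-x)(1-\kappa x)$ and invoke the comparison theorem for one-dimensional SDEs.

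Concretely, writing the drift of \eqref{diff4} as $h(x)\,x(1-x)$ with $h(x)=-\alpha+\kappa(1-\kappa x)$, I would compare $h$ with the affine maps $c(1-\kappa x)$. Since $x(1-x)\ge 0$ on $[0,1]$ and both $h(x)$ and $c(1-\kappa x)$ are affine in $x$, it suffices to compare them at the endpoints $x=0$ and $x=1$. Taking $c_2=\kappa$ gives $h(x)=\kappa(1-\kappa x)-\alpha\le \kappa(1-\kappa x)$ because $\alpha\ge 0$; taking $c_1=\kappa-\alpha/(1-\kappa)$ gives $h(x)\ge c_1(1-\kappa x)$, with equality at $x=1$ and the inequality at $x=0$ following from $\alpha/(1-\kappa)\ge\alpha$. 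Hence the drift of \eqref{diff4} is squeezed between those of $\overline{Y}^{(c_1)}$ and $\overline{Y}^{(\kappa)}$. Since the shared diffusion coefficient is $1/2$-Hölder continuous and the drifts are smooth on $[0,1]$, the Yamada--Watanabe comparison theorem applies and yields, for a common Brownian motion and common initial value $X_0=x$,
\begin{equation*}
\overline{Y}^{(c_1)}_t\le X_t\le \overline{Y}^{(\kappa)}_t,\qquad t\ge 0,\quad \text{a.s.}
\end{equation*}

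Finally, I would read off the accessibility of each boundary from this sandwich. The upper bound $\overline{Y}^{(\kappa)}$ reaches $0$ in finite time with positive probability; on that event $X_t\le \overline{Y}^{(\kappa)}_t=0$ forces $X$ to hit $0$ no later, so $0$ is accessible for $X$. Symmetrically, the lower bound $\overline{Y}^{(c_1)}$ reaches $1$ in finite time with positive probability, and there $X_t\ge \overline{Y}^{(c_1)}_t=1$ forces $X$ to reach $1$, so $1$ is accessible. The main point to verify is that the hypotheses of the comparison theorem are met (in particular that the common diffusion coefficient is $1/2$-Hölder continuous and that the polynomial drifts are Lipschitz on $[0,1]$), together with the elementary but essential bookkeeping that the upper bound controls accessibility of $0$ while the lower bound controls accessibility of $1$. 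An alternative would be to compute the scale function $S'(u)\propto e^{-2\kappa u}(1-\kappa u)^{-2\alpha/\kappa}$ of \eqref{diff4} directly and apply Feller's integral test near each boundary, as in the second part of Lemma \ref{propo1}; this is in fact the same scale density that appears in Proposition \ref{propoM2}.
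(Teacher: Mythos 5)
Your proof is correct and takes essentially the same route as the paper: a stochastic domination argument sandwiching $X$ between the time-changed Wright--Fisher diffusions $\overline{Y}^{(c)}$ from the proof of Lemma \ref{propo1}, with the identical upper comparison $c=\kappa$. The only difference is your lower comparison constant $c_1=\kappa-\alpha/(1-\kappa)$ in place of the paper's $\alpha/(\kappa-1)=-\alpha/(1-\kappa)$ (both drift inequalities hold, yours being slightly tighter), and you make explicit the endpoint verification and comparison-theorem hypotheses that the paper leaves implicit.
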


\begin{proof}
Let $X=(X_t, t\ge 0)$ be the unique strong solution of \eqref{diff4} parametrised by $\alpha \ge 0$  and $\kappa$ a rational number in $(0,1)$.  For $c \in \R$, consider the process $\overline Y^{(c)} = (\overline Y^{{(c)}}_t, t\geq0)$ defined in the proof of Lemma \ref{propo1}. Recall that both boundaries 0 and 1 are accessible for this process.  Again, we use a stochastic domination argument. If $\alpha >0$, we have, a.s.
$$ \overline Y^{(\kappa)}_t \ge X_t \ge \overline Y ^{(\alpha/(\kappa -1))}_t, \ t\ge 0,$$
and if $\alpha = 0, X \equiv \overline Y^{(\kappa)}$, and the conclusion follows.
\end{proof}
 
\begin{proposition}\label{TM2} Let $X=(X_t, t\ge 0)$ be the unique strong solution of \eqref{diff4} parametrised by $\alpha \ge 0$  and $\kappa$ a rational number in $(0,1)$. Then $$ \mathbb{E}[T_{0,1} | X_0 = x]<\infty.$$
\end{proposition}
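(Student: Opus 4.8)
The plan is to establish finiteness of the expected time to fixation for the SDE \eqref{diff4} by the same stochastic domination strategy already used in the preceding lemma. Recall that \eqref{diff4} can be written as a random time-change of the classical Wright-Fisher diffusion with selection: setting $\tilde X$ to be the Wright-Fisher diffusion with selection coefficient $c$ and defining the clock $\theta_t=\int_0^t(1-\kappa X_s)\,\ud s$, we have $X_t=\tilde X_{\int_0^t(1-\kappa X_s)\ud s}$ up to $T_{0,1}$. Since $\kappa\in(0,1)$, the integrand $(1-\kappa X_s)$ is bounded above and below, namely $1-\kappa\le 1-\kappa X_s\le 1$, so the clock $\theta_t$ runs at a rate comparable to real time. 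This is exactly the mechanism that transfers finiteness of the fixation time from the reference diffusion to $X$.

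The key steps, in order, are as follows. First I would recall from the proof of the preceding lemma that $X$ is sandwiched almost surely between two time-changed Wright-Fisher diffusions with selection, $\overline Y^{(\kappa)}_t\ge X_t\ge \overline Y^{(\alpha/(\kappa-1))}_t$ (with equality $X\equiv\overline Y^{(\kappa)}$ when $\alpha=0$). For each fixed $c\in\R$, the process $\overline Y^{(c)}$ is a time-change of $\hat Y^{(c)}$ via the clock $A_t=\int_0^t(1-\kappa\hat Y^{(c)}_s)^{-1}\ud s$, and since $1\le (1-\kappa\hat Y^{(c)}_s)^{-1}\le (1-\kappa)^{-1}$, the inverse clock $\theta_t$ satisfies $(1-\kappa)t\le\theta_t\le t$. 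Second, the classical Wright-Fisher diffusion with selection $\hat Y^{(c)}$ reaches $\{0,1\}$ in finite expected time (this is standard; the expected absorption time is finite for every $c\in\R$). Third, because the fixation time of $\overline Y^{(c)}$ is $A$ evaluated at the fixation time of $\hat Y^{(c)}$, and $A$ inflates real time by at most the bounded factor $(1-\kappa)^{-1}$, the expected fixation time of $\overline Y^{(c)}$ is finite; concretely, the fixation time of $\overline Y^{(c)}$ is bounded by $(1-\kappa)^{-1}$ times the fixation time of $\hat Y^{(c)}$, so taking expectations gives finiteness. Finally, the almost-sure sandwich forces the fixation time of $X$ to be dominated by that of the bounding diffusions, so $\mathbb{E}[T_{0,1}\mid X_0=x]<\infty$.

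Alternatively, and perhaps more cleanly, I would argue directly on $X$ itself via the time-change representation $\theta_t=\int_0^t(1-\kappa X_s)\ud s$. If $T_{0,1}^{X}$ denotes the fixation time of $X$ and $T_{0,1}^{\tilde X}$ that of the underlying Wright-Fisher diffusion with selection $\tilde X$ (with $c=-\alpha$ in the relevant parametrisation), then the bounds $(1-\kappa)t\le\theta_t\le t$ give $T_{0,1}^{X}\le (1-\kappa)^{-1}T_{0,1}^{\tilde X}$, and finiteness of $\mathbb{E}[T_{0,1}^{\tilde X}]$ transfers immediately. I should be careful here that the drift in \eqref{diff4} is $(-\alpha+\kappa(1-\kappa X_t))X_t(1-X_t)$ rather than a pure selection drift, so the clean time-change onto a single $\tilde X$ is the one advertised in the discussion after Theorem \ref{scaling4} for the $\alpha=0$ case; for $\alpha>0$ the stochastic-domination route is the safe one, which is why I would present the sandwich argument as the primary proof.

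I expect the only genuine obstacle to be bookkeeping rather than conceptual difficulty: one must make sure the domination is applied to the correct pair of bounding diffusions and that the direction of the inequalities is consistent, since fixation of $X$ means absorption at either endpoint, so domination from both sides (or monotonicity of hitting times under the comparison) is needed to control $T_{0,1}$ rather than just one one-sided hitting time. The boundedness $1-\kappa\le 1-\kappa X_s\le 1$ is what makes everything work and is the crucial place where $\kappa<1$ enters; were $\kappa=1$, the lower bound on the clock would degenerate and the argument would fail, consistent with part $(iii)$ of Proposition \ref{T}.
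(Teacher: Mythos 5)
Your primary (sandwich) argument has a genuine gap at its final step. From $\overline Y^{(\kappa)}_t \ge X_t \ge \overline Y^{(\alpha/(\kappa-1))}_t$ you can only conclude, pathwise, that $X$ hits $0$ no later than the \emph{upper} process does, and hits $1$ no later than the \emph{lower} process does; hence $T^X_{0,1}\le T_0^{\mathrm{up}}\wedge T_1^{\mathrm{low}}$, where $T_0^{\mathrm{up}}$ is the hitting time of $0$ by $\overline Y^{(\kappa)}$ and $T_1^{\mathrm{low}}$ the hitting time of $1$ by $\overline Y^{(\alpha/(\kappa-1))}$. But on the event where the upper process is absorbed at $1$ and the lower process at $0$ --- an event of positive probability in general --- both of these hitting times are infinite, and the sandwich then gives no control at all on $T^X_{0,1}$. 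So ``the fixation time of $X$ is dominated by that of the bounding diffusions'' is false as a pathwise statement: the fixation time of the upper process bounds $T^X_{0,1}$ only on the event that the upper process fixes at $0$, and symmetrically for the lower one. You half-noticed this (``domination from both sides \dots is needed''), but flagging the issue is not resolving it: to rescue the sandwich one would need an extra ingredient, e.g.\ a uniform bound $\inf_{x\in[0,1]}\mathbb{P}_x(T_{0,1}\le t_0)\ge\delta>0$ followed by a Markov-property iteration to obtain geometric tails. This is precisely why domination suffices in the paper for \emph{accessibility} (a positive-probability statement, as in the unnumbered lemma preceding the proposition and in Lemma \ref{propo1}) but not, by itself, for finiteness of the mean.

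Ironically, the route you demoted to an ``alternative'' is the sound one, and it works for every $\alpha\ge0$, not only $\alpha=0$: the solution of \eqref{diff4} is the time change, via $\theta_t=\int_0^t(1-\kappa X_s)\,\ud s\ge(1-\kappa)t$, of the diffusion $\ud\tilde X_t=\left(\kappa-\frac{\alpha}{1-\kappa\tilde X_t}\right)\tilde X_t(1-\tilde X_t)\,\ud t+\sqrt{\tilde X_t(1-\tilde X_t)}\,\ud B_t$, a Wright--Fisher diffusion with \emph{bounded} frequency-dependent selection (the coefficient $\kappa-\alpha/(1-\kappa x)$ is bounded on $[0,1]$ because $\kappa<1$). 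Its scale density is bounded above and below by positive constants, so its Green's function is integrable exactly as in the neutral case, giving $\mathbb{E}[T^{\tilde X}_{0,1}]<\infty$, and then $T^X_{0,1}\le(1-\kappa)^{-1}T^{\tilde X}_{0,1}$ yields the claim. The paper's own proof is different in presentation but rests on the same estimate: since the {\rm\bf(M2)} diffusion has the same infinitesimal variance as the {\rm\bf(M1)} one, its Green's function again has the form \eqref{green} with $S$ the scale function of \eqref{diff4} (computed explicitly in the proof of Proposition \ref{propoM2}), and one checks integrability near $u=1$, which holds precisely because the factor $1-\kappa u$ stays bounded away from $0$ when $\kappa<1$, as in part $(ii)$ of Proposition \ref{T}. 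Either way, finiteness of $\mathbb{E}[T_{0,1}]$ ultimately comes from a Green's-function/scale-function estimate, not from the stochastic sandwich.
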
 

\begin{proof}
We follow closely the proof of Proposition \ref{T}. Recall that the Wright-Fisher diffusion with efficiency under {\rm {\bf (M1)}}  or  {\rm {\bf (M2)}}  have the same infinitesimal variance, so the Green's function associated to the Wright-Fisher diffusion with efficiency under  {\rm {\bf (M2)}}  also satisfies equation \eqref{green} (where $S$ is its scale function). The conclusion follows by the same arguments that are used to prove item $(ii)$ of Proposition \ref{T}.
\end{proof}

\begin{proof}[Proof of Proposition \ref{propoM2}]
By the same arguments as in the proof of Proposition \ref{propo1new}, we have
$$\p_{y}(\{{\rm Fix.}\})=\p(X_{T_{0,1}}=0 | X_0 = 1-y)=\frac{S(1)-S(1-y)}{S(1)-S(0)}, $$
 where $S$ is the scale function of the Wright-Fisher diffusion with efficiency  {\rm {\bf (M2)}} , which, for $x \in [0,1]$ is given by 
 \begin{eqnarray*}
S(x)&=&\int_0^x \exp\left\{-2\int_\theta^u \left(\kappa - \frac{\alpha }{1- \kappa v}\right) \ud v \right\}\ud u \\
& = & K \int_0^x \exp(-2 \kappa u) (1-\kappa u)^{-2 \alpha/\kappa})\ud u,
\end{eqnarray*}
 where $\theta$ is an arbitrary positive number and $K$ is a constant that depends on $( \kappa, \alpha, \theta)$. 
\end{proof}

 \noindent \textbf{Acknowledgements.} 
 All authors would like to thank Ximena Escalera, Jos\'e Carlos Ram\'on Hern\'andez and Fernanda L\'opez for carefully reading a preliminary version of this paper and for many useful discussions. We want to thank as well the two anonymous referees whose careful reading led to significant improvements. 

 JCP  acknowledges support from  the Royal Society and CONACyT (CB-250590). This work was concluded whilst JCP was on sabbatical leave holding a David Parkin Visiting Professorship  at the University of Bath, he gratefully acknowledges the kind hospitality of the Department and University. AGC acknowledges support from UNAM (PAPIIT IA100419) and CONACYT (CB-A1-S-14615). VMP acknowledges support from the DGAPA-UNAM postdoctoral program.

\bigskip 

 \noindent \textbf{References} 

\end{document}